%% file: LDP.tex
\documentclass[11pt,a4paper,reqno]{amsart}
\usepackage[english]{babel}
\usepackage[T1]{fontenc}
\usepackage[utf8]{inputenc}
\usepackage{csquotes}

\input{preamble/packages}
\input{preamble/shortcuts}

\input{preamble/theoremEnvironments}

\usepackage{hyperref}
\title[Sample-path LDP for Scheduled arrival processes with Unpunctuality]{Sample-path Large deviations for \\Scheduled Arrival Processes with Unpunctuality }
\author{Serena Della Corte}
\address{University of Leiden, Gorlaeus Building, Einsteinweg 55 2333 CC Leiden, }
\email{s.della.corte@math.leidenuniv.nl}

\author{Michel Mandjes}
\address{University of Leiden, Gorlaeus Building, Einsteinweg 55 2333 CC Leiden, }
\email{m.r.h.mandjes@math.leidenuniv.nl}
\begin{document}
\maketitle

\begin{abstract}
    We study a scheduled-arrival process in which deterministic arrival times are perturbed by i.i.d. unpunctuality random variables. For the empirical mean of independent copies of the resulting counting process, we prove a sample-path large deviations principle on $D([0,T])$, equipped with the Skorokhod topology. As an application, we derive large deviation principles for the corresponding workload process under deterministic and random service setting. In the deterministic service case, we also identify the exponentially tilted arrival path associated with a rare workload overflow event. \smallskip

    \noindent\textit{Keywords:}\!
scheduled arrivals; unpunctuality; sample-path large deviations; workload process;
overflow probability; queueing systems. \smallskip

 \noindent\textit{MSC 2020 classification:} 60F10; 60K25; 90B22; 60K30.

\end{abstract}

\section{Introduction}
In many real service systems, arrivals are planned in advance rather than occurring spontaneously. Patients are assigned appointment times at hospitals, customers reserve time slots for services, and travelers are scheduled for check-in or security screening. In practice, however, scheduled arrival times are rarely met exactly: some customers arrive early, others late, and these deviations from schedule determine the actual congestion experienced by the system. A familiar example is provided by the large COVID-19 vaccination campaigns, where individuals were given fixed vaccination appointments, yet the queues that formed were driven largely by the randomness in their arrival times relative to those appointments rather than by the appointment schedule itself.

Such systems are not well captured by the classical Poisson arrival model. On the one hand, arrivals are not completely random, since they are anchored to an underlying schedule. On the other hand, they are not deterministic either, because the scheduled times are subject to random deviations reflecting individual unpunctuality. The resulting arrival process therefore combines a deterministic scheduling mechanism with stochastic perturbations. This provides the motivation for studying scheduled arrival processes with random perturbations, which bridge the gap between purely random and perfectly scheduled arrivals.

In this paper, we consider a deterministic schedule with equally spaced appointment times $k\Delta$, where $k\in\Z$ and $\Delta>0$ is fixed. The actual arrival time associated with slot~$k$ is
\begin{equation}
\label{eq:intro_arrival_time}
A_k: = k\Delta + T_k,
\end{equation}
where the random variables $(T_k)_{k\in\Z}$ are independent and identically distributed. The corresponding arrival counting process is
\[
A(t)=\sum_{k\in\Z}\mathbf{1}_{\{0<A_k\le t\}}, \qquad t\in[0,T].
\]
The deterministic schedule specifies the intended arrival times, while the random perturbations $T_k$ capture customers' unpunctuality, allowing for both early and late arrivals. This simple construction separates the planned component of the arrival process from the stochastic variability around it.

\medskip

When many customers arrive substantially earlier or later than scheduled, an unusually large number of arrivals may accumulate within a short time interval, creating a rare congestion event. Large deviations theory provides a natural framework for quantifying the probability of such events and for identifying the most likely evolution of the arrival process leading to congestion.
Large deviations techniques have been widely applied to the analysis of rare events in queueing systems, particularly those involving unusually large workloads or queue lengths. For example, \cite{DuffieldOConnell} used large deviations principles to derive overflow asymptotics for single-server queues, while \cite{BotvichDuffield} analyzed buffer overflow and loss probabilities in systems where many (essentially independent) input streams share a common buffer. These results demonstrate the effectiveness of large deviations theory in characterizing rare but operationally important events in stochastic service systems.

This many-sources viewpoint is particularly close to the setting of the present paper: the input is formed by aggregating many independent sources, and one studies how rare congestion events arise as the number of sources increases. Sample-path large deviations for queues with many inputs were developed in \cite{Wischik01}, in a very general traffic framework allowing for broad classes of arrival processes, and a broader treatment of large deviations methods for queueing systems can be found in \cite{BigQueues04}, which develops a general theory beyond specific many-sources scaling regimes. In contrast, the related work \cite{Mandjes1999} focuses on identifying the most likely path leading to buffer overflow in a more specific setting, while \cite{ManjesKim} studies a related many-sources regime in the context of small-buffer asymptotics.

\medskip

As discussed above, the input process considered in this paper has a particular structure that arises naturally in appointment-based systems. Arrivals follow a deterministic schedule, with randomness introduced through customer unpunctuality, which acts as a perturbation of the scheduled times. Appointment systems with such timing perturbations have been studied from a performance-analysis perspective; see, for example, \cite{FiemsDeVuyst,DefeFiemsDeVuyst}. A closely related recent contribution is \cite{DeTurckFiems26}, where the authors show that, although the arrival process may be well approximated by a Poisson process, this approximation can fail at the queueing level.

Our focus differs from this performance-analysis perspective. We study the scheduled-arrival process itself from a large deviations viewpoint and, to the best of our knowledge, establish the first sample-path large deviations principle for the empirical mean of such counting processes. The proof combines finite-dimensional large deviations, exponential tightness, and a variational characterization of the rate function. A key feature is that it relies only on boundedness of the periodicized density associated with the unpunctuality distribution (Assumption~\ref{ass:Per}), an assumption verified for Laplace and double-Pareto unpunctuality distributions.

As an application, we study workload overflow. We first consider deterministic service and then extend the analysis to random service times. In both cases, the workload LDP follows from the arrival-process LDP via the contraction principle. For deterministic service, we also identify the exponentially tilted arrival path associated with a rare overflow event.

\medskip

The rest of the paper is organized as follows. Section~\ref{sec:Preliminaries} recalls the main definitions used in the paper. Section~\ref{sec:Model} introduces the scheduled-arrival model and the assumption. Section~\ref{sec:log-laplace} derives the relevant log-Laplace functionals. Section~\ref{sec:main_theorem} proves the sample-path large deviations principle. In Section~\ref{sec:examples}, we verify the main assumption for two examples of unpunctuality distributions. Section~\ref{sec:workload} applies the result to workload large deviations and rare overflow paths.

\section{Preliminaries}\label{sec:Preliminaries}
In this section, we give the main definitions and general results used throughout the paper. 
\subsection{Main definitions}

We denote by $D([0,T])$ the space of càdlàg functions $\phi:[0,T]\to\R$, i.e., functions that are right-continuous with left limits. 
We equip $D([0,T])$ with the Skorokhod $J_1$ topology. We refer to \cite{EK86, Bill99} for background on this topology.

The following modulus of continuity plays a central role in tightness criteria for processes in $D([0,T])$.

\begin{definition}[Skorokhod modulus]\label{def:wprime}
For $\phi \in D([0,T])$ and $\delta>0$, define
\begin{equation}
w'(\phi,\delta,T)
:=
\inf_{\Pi}
\max_{1\le i\le m}
\sup_{s,t \in [t_{i-1}, t_i]} |\phi(t)-\phi(s)|,
\end{equation}
where the infimum is taken over all partitions
$
\Pi=\{0=t_0<t_1<\cdots<t_m=T\}
$
such that
$
t_i - t_{i-1} \ge \delta$, with $i=1,\dots,m.$
\end{definition}

The quantity $w'(\phi,\delta,T)$ measures the maximal oscillation of the path $\phi$ over time intervals of length at least $\delta$, optimized over all such partitions. It is a standard modulus used in compactness and tightness criteria in the Skorokhod space; see \cite{FK06,EK86}.

For completeness, we briefly recall the notion of a {\it large deviations principle} (LDP); see \cite{DZ98,FK06}.

\begin{definition}[Large deviations principle]
Let $(E,\mathcal{T})$ be a topological space. A sequence of random elements $\{X^{(n)}\}$ taking values in $E$ satisfies a large deviations principle with speed $n$ and rate function $I:E\to[0,\infty]$ if:
\begin{itemize}
\item for every closed set $F\subset E$,
\[
\limsup_{n\to\infty}\frac{1}{n}\log \Pbb(X^{(n)}\in F)
\le -\inf_{x\in F} I(x),
\]
\item for every open set $G\subset E$,
\[
\liminf_{n\to\infty}\frac{1}{n}\log \Pbb(X^{(n)}\in G)
\ge -\inf_{x\in G} I(x).
\]
\end{itemize}
The rate function $I$ is called {\it good} if its level sets $\{x: I(x)\le M\}$ are compact for all $M<\infty$.
\end{definition}
The notion of {\it exponential tightness} \cite{DZ98,FK06} is central in establishing LDPs in infinite-dimensional spaces.

\begin{definition}[Exponential tightness]
A sequence of random elements $\{X^{(n)}\}$ in $D([0,T])$ is said to be exponentially tight with speed $n$ if for every $M<\infty$, there exists a compact set $K_M \subset D([0,T])$ such that
\[
\limsup_{n\to\infty} \frac{1}{n} \log \Pbb(X^{(n)} \notin K_M)
\le -M.
\]
\end{definition}




\section{Model and Assumptions}\label{sec:Model}
We introduce the scheduled-arrivals model with random unpunctuality and state the structural assumptions used throughout the paper.

\subsection{Scheduled Arrivals with Unpunctuality}
Fix $\Delta>0$. For each $k\in\Z$, let
\begin{equation}\label{eq:Ak}
A_k := k\Delta + T_k,
\end{equation}
where $(T_k)_{k\in\Z}$ are i.i.d.\ continuous random variables with distribution function $F$ and density $f$. Thus, customers are scheduled to arrive at the deterministic times $k\Delta$, but experience independent random deviations.
Given a time horizon $T>0$, define the arrival counting process
\begin{equation}\label{eq:At}
A(t):=A((0,t])=\sum_{k\in\Z}\1_{\{0<A_k\le t\}},\qquad t\in[0,T].
\end{equation}
The process $A$ is nondecreasing with càdlàg sample paths.
For each $k\in\Z$, the distribution function and density of $A_k$ are
\begin{equation}\label{eq:Pk}
P_k(t)=\Pbb(A_k\le t)=F(t-k\Delta),
\qquad
p_k(t)=f(t-k\Delta).
\end{equation}
\subsection{Basic finiteness property}
We first verify that the counting process is well-defined, i.e., that only finitely many arrivals occur in any finite time interval.
\begin{proposition}\label{prop:finiteness}
For every $t\in[0,T]$, the random variable
$
A(t)$
is finite almost surely.
\end{proposition}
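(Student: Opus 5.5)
The plan is to show that the expected number of arrivals in $(0,t]$ is finite. Finiteness almost surely then follows, since a nonnegative random variable with finite mean is finite a.s.

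By \eqref{eq:At}, Tonelli's theorem, and \eqref{eq:Pk},
\[
\E A(t)=\sum_{k\in\Z}\Pbb(0<A_k\le t)=\sum_{k\in\Z}\bigl(F(t-k\Delta)-F(-k\Delta)\bigr)=\sum_{k\in\Z}\int_{-k\Delta}^{t-k\Delta} f(u)\,\d u .
\]
Exchanging sum and integral (everything is nonnegative), this equals
\[
\int_{\R} f(u)\, N_t(u)\,\d u, \qquad N_t(u):=\#\{k\in\Z:\ -k\Delta< u\le t-k\Delta\}.
\]
The set counted by $N_t(u)$ is $\{k\in\Z : k\Delta\in(-u,\,t-u]\}$. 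Since the lattice $\Delta\Z$ has spacing $\Delta$, a half-open interval of length $t$ contains at most $\lfloor t/\Delta\rfloor+1$ of its points. Hence $N_t(u)\le t/\Delta+1$ uniformly in $u$, and
\[
\E A(t)\le \Bigl(\frac{t}{\Delta}+1\Bigr)\int_\R f=\frac{t}{\Delta}+1<\infty .
\]
Therefore $A(t)<\infty$ almost surely.

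As an alternative route, $\sum_k \Pbb(0<A_k\le t)<\infty$ together with Borel--Cantelli shows that only finitely many of the events $\{0<A_k\le t\}$ occur, which is the same conclusion.

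No step is a real obstacle. The only point needing care is the uniform bound on the lattice count $N_t(u)$, and this uses neither Assumption~\ref{ass:Per} nor any moment condition on $T_k$, only that $f$ integrates to one. Consequently the statement holds simultaneously for all $t\in[0,T]$: the exceptional null set for $t=T$ covers every smaller $t$, because $A(t)\le A(T)$.
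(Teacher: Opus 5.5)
Your proof is correct and follows the paper's argument essentially verbatim. Like the paper, you write $\E[A(t)]=\sum_{k}\bigl(F(t-k\Delta)-F(-k\Delta)\bigr)$, interchange sum and integral by Tonelli, and bound uniformly in $u$ the number of shifts $k$ whose interval contains $u$; this uses only $\int f=1$ and not Assumption~\ref{ass:Per}. Your half-open count $\lfloor t/\Delta\rfloor+1$ is slightly sharper than the paper's $\lceil t/\Delta\rceil+1$, and your remark that $A(t)\le A(T)$ yields a single null set for all $t$ is a harmless bonus.
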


\begin{proof}
Let $I_k(t):=\mathbf 1_{\{A_k\in(0,t]\}}$. Then $A(t)=\sum_{k\in\Z} I_k(t)$, and
\[
\mathbb E[A(t)]
=\sum_{k\in\Z}\mathbb P(A_k\in(0,t])
=\sum_{k\in\Z}\big(F(t-k\Delta)-F(-k\Delta)\big).
\]
Since $F$ has density $f\in L^1(\mathbb R)$, Tonelli's theorem yields
\[
\mathbb E[A(t)]
=\int_{\mathbb R} f(u)\sum_{k\in\Z}\mathbf 1_{[-k\Delta,\,t-k\Delta]}(u)\,du.
\]
For each $u\in\mathbb R$ the set of indices $k$ such that $u\in[-k\Delta,\,t-k\Delta]$ has cardinality at most $\lceil t/\Delta\rceil+1$, hence
\[
\mathbb E[A(t)] \le C\|f\|_{L^1} < \infty.
\]
Thus $A(t)$ is integrable and therefore finite almost surely.
\end{proof}

\subsection{Bounded periodicized density assumption}
In order to control the aggregate contribution of all potential arrivals, it is convenient to introduce the following \textit{periodicized density}.
\begin{equation}\label{eq:rho}
\rho(u) := \sum_{k\in\Z} f(u-k\Delta), \qquad u\in[0,T].
\end{equation}
The function $\rho(u)$ represents the total density at time $u$ obtained by summing the contributions of all shifted arrival distributions. 

The following assumption ensures that the total mass of arrivals remains uniformly controlled over the interval $[0,T]$.
\begin{assumption}[Bounded periodicized density]\label{ass:Per}
\begin{equation}\label{eq:Per}
\sup_{u\in[0,T]} \rho(u) < \infty.
\end{equation}
\end{assumption}

Assumption \ref{ass:Per} plays a central role in the analysis. It ensures:
\begin{itemize}
\item uniform bounds on the mass of arrivals over short time intervals, which are crucial for exponential tightness;
\item finiteness of exponential moments of the counting process at fixed times.
\end{itemize}



We first derive a bound on the expected number of arrivals in a time interval.
\begin{proposition}[Increment mass bound]\label{prop:incmass}
Under Assumption \ref{ass:Per}, there exists $C > 0$ such that for all $0\le s<t\le T$,
\begin{equation}\label{eq:incmass}
\sum_{k\in\Z}\Pbb(A_k\in(s,t]) \le C(t-s).
\end{equation}
\end{proposition}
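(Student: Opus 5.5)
The plan is to write each probability as an integral of the shifted density, exchange sum and integral, and then apply Assumption~\ref{ass:Per}. By \eqref{eq:Pk}, for each $k\in\Z$ and $0\le s<t\le T$,
\[
\Pbb(A_k\in(s,t]) = F(t-k\Delta)-F(s-k\Delta) = \int_s^t f(u-k\Delta)\,du .
\]
Since $f\ge 0$, Tonelli's theorem lets us interchange the sum over $k$ and the integral:
\[
\sum_{k\in\Z}\Pbb(A_k\in(s,t]) = \int_s^t \sum_{k\in\Z} f(u-k\Delta)\,du = \int_s^t \rho(u)\,du ,
\]
with $\rho$ the periodicized density defined in \eqref{eq:rho}. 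Because $(s,t]\subset[0,T]$, the integrand is bounded by $C:=\sup_{u\in[0,T]}\rho(u)$, which is finite by \eqref{eq:Per}. This gives $\int_s^t\rho(u)\,du\le C(t-s)$, which is \eqref{eq:incmass}.

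The only point that needs care is the interchange. Tonelli applies because every term is nonnegative, so no integrability has to be checked beforehand; finiteness of the left-hand side then follows from the bound itself. If $C=0$, one can take any positive constant instead, since the statement asks for $C>0$. No step here is a real obstacle, because the assumption is tailored to exactly this computation. One might also want $\rho$ to be measurable, but this is automatic: $\rho$ is a countable sum of nonnegative measurable functions.
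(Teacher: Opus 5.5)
Your proposal is correct and follows the paper's proof: you write each probability as $\int_s^t f(u-k\Delta)\,du$, interchange sum and integral to get $\int_s^t \rho(u)\,du$, and bound by $(t-s)\sup_{[0,T]}\rho$. The Tonelli justification and the remark about replacing $C=0$ by a positive constant are details the paper leaves implicit.
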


\begin{proof}
The result follows directly from the definition of $\rho$:
\begin{equation}
\sum_{k\in\Z}\Pbb(A_k\in(s,t])=\sum_{k\in\Z}\int_s^t f(u-k\Delta)\,du
=\int_s^t \rho(u)\,du \le (t-s)\sup_{[0,T]}\rho(u).
\end{equation}
Hence, the claim holds with $C :=\sup_{u\in[0,T]}\rho(u)$.
\end{proof}
We next establish exponential integrability of the counting process at fixed times.
Observe that it entails that no Cramér-type moment assumption on the unpunctuality distribution is required.

\begin{proposition}\label{prop:exp-moments}
Under Assumption~\ref{ass:Per}, 
$
\E e^{\lambda A(t)}<\infty,
$ for any $t\in[0,T]$ and $\lambda\in\R$.
\end{proposition}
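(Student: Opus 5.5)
For $\lambda\le 0$ there is nothing to prove: $A(t)\ge 0$ gives $e^{\lambda A(t)}\le 1$. So I assume $\lambda>0$ and use that $A(t)=\sum_{k\in\Z} I_k(t)$ is a sum of \emph{independent} Bernoulli variables $I_k(t)=\1_{\{A_k\in(0,t]\}}$ with success probabilities $q_k:=\Pbb(A_k\in(0,t])$. Independence holds because the $T_k$ are i.i.d.

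The plan is to compute the moment generating function through the truncated sums $A_N(t):=\sum_{|k|\le N} I_k(t)$. For these finite sums independence gives
\[
\E e^{\lambda A_N(t)}=\prod_{|k|\le N}\bigl(1+q_k(e^\lambda-1)\bigr)\le \exp\Bigl((e^\lambda-1)\sum_{|k|\le N} q_k\Bigr),
\]
using $1+x\le e^x$. Proposition~\ref{prop:incmass} with $s=0$ bounds the sum: $\sum_{k\in\Z} q_k\le C t$ with $C=\sup_{[0,T]}\rho$. Hence $\E e^{\lambda A_N(t)}\le \exp\bigl((e^\lambda-1)Ct\bigr)$ uniformly in $N$.

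To pass to the limit, note that $A_N(t)\uparrow A(t)$ almost surely as $N\to\infty$, and the limit is finite by Proposition~\ref{prop:finiteness}. Monotone convergence then yields $\E e^{\lambda A(t)}\le \exp\bigl((e^\lambda-1)Ct\bigr)<\infty$. In other words, $A(t)$ is dominated in the exponential-moment sense by a Poisson variable with mean $Ct$.

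The main point requiring care is the independence and infinite-product step. Working with the finite truncations and applying monotone convergence avoids having to justify the infinite product directly, so I expect no real obstacle. The only substantive input is the summability $\sum_k q_k<\infty$, which is exactly where Assumption~\ref{ass:Per} enters via Proposition~\ref{prop:incmass}. Summability alone, which already follows from the computation in Proposition~\ref{prop:finiteness}, would in fact suffice here. The assumption matters for making the bound linear in $t$, which is what later uses will need.
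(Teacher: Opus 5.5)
Your proof is correct and follows the paper's argument. Like the paper, you factor $\E e^{\lambda A(t)}$ over the independent Bernoulli indicators, bound each factor using $1+x\le e^{x}$ (equivalent to the paper's $\log(1+u)\le u$), and control $\sum_k q_k(t)$ by $\int_0^t\rho(u)\,du$. Your truncation plus monotone convergence just makes rigorous the infinite-product identity that the paper writes down directly. Your side remark is also right: $\sum_k q_k(t)<\infty$, already shown in Proposition~\ref{prop:finiteness}, suffices here without Assumption~\ref{ass:Per}.
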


\begin{proof}
Let $q_k(t):=\Pbb(A_k\in(0,t])$. Since the indicators
$\1_{\{A_k\in(0,t]\}}$ are independent,
\[
\E e^{\lambda A(t)}
=\prod_{k\in\Z}\bigl(1+q_k(t)(e^\lambda-1)\bigr).
\]
Hence, using $\log(1+u)\le u$,
\[
\log\E e^{\lambda A(t)}
\le (e^\lambda-1)\sum_{k\in\Z}q_k(t)
=(e^\lambda-1)\int_0^t\rho(u)\,du,
\]
which is finite by Assumption~\ref{ass:Per}.
\end{proof}

\section{Many-sources scaling and Laplace functionals}\label{sec:log-laplace}

We introduce the many-sources scaling and derive an explicit expression for the log-Laplace functional of the arrival process, which forms the basis of the large deviations analysis.

\subsection{Empirical mean process}

Let $A^{(1)}(\cdot),\ldots,A^{(n)}(\cdot)$ be i.i.d.\ copies of the counting process~\eqref{eq:At}, and define the empirical mean
\begin{equation}\label{eq:empirical}
\bar A^{(n)}(t)
=\frac1n\sum_{i=1}^nA^{(i)}(t),
\qquad t\in[0,T].
\end{equation}
Our goal is to establish a sample-path LDP for $\{\bar A^{(n)}\}_{n\ge1}$.

\subsection{A path identity}

With  $\theta:[0,T]\to\R$ a bounded measurable function, by Proposition~\ref{prop:finiteness},
\[
\int_0^t\theta(s)A(s)\,ds
=\sum_{k\in\Z}\int_0^t\theta(s)\1_{\{0<A_k\le s\}}\,ds.
\]
Since
$
\1_{\{0<A_k\le s\}}
=\1_{\{A_k\in(0,t]\}}\1_{\{s\ge A_k\}},$
we obtain
\begin{equation}\label{eq:keyidentity}
\int_0^t\theta(s)A(s)\,ds
=
\sum_{k\in\Z}
\1_{\{A_k\in(0,t]\}}
\int_{A_k}^t\theta(r)\,dr,
\qquad t\in[0,T].
\end{equation}
Exponentiating both sides yields
\begin{equation}\label{eq:prod}
\exp\!\left(\int_0^t\theta(s)A(s)\,ds\right)
=
\prod_{k\in\Z}
\exp\!\left(
\1_{\{A_k\in(0,t]\}}
\int_{A_k}^t\theta(r)\,dr
\right),
\end{equation}
where the product is almost surely finite.

\subsection{The log-Laplace functional}
We now derive an explicit expression for the log-Laplace functional
\begin{equation}\label{eq:LambdaDef}
\Lambda_t(\theta)
:=
\log\E\exp\!\left(\int_0^t\theta(s)A(s)\,ds\right).
\end{equation}
By \eqref{eq:prod} and the independence of the random variables $\{A_k\}_{k\in\Z}$,
\begin{equation}\label{eq:LambdaSum}
\Lambda_t(\theta)
=
\sum_{k\in\Z}\Lambda_{t,k}(\theta),
\qquad
\Lambda_{t,k}(\theta)
:=
\log\E\exp\!\left(
\1_{\{A_k\in(0,t]\}}
\int_{A_k}^t\theta(r)\,dr
\right).
\end{equation}
Writing
\[
H_\theta(s,t):=\int_s^t\theta(r)\,dr,
\]
and conditioning on $A_k$ gives
\[
\E\exp\!\left(
\1_{\{A_k\in(0,t]\}}H_\theta(A_k,t)
\right)
=
1-P_k(t)+P_k(0)
+\int_0^t e^{H_\theta(s,t)}p_k(s)\,ds.
\]
Hence
\begin{equation}\label{eq:Lambda_k}
\Lambda_{t,k}(\theta)
=
\log\!\left(
1-P_k(t)+P_k(0)
+\int_0^t
e^{\int_s^t\theta(r)\,dr}
\,p_k(s)\,ds
\right).
\end{equation}
\subsection{The scaled log-Laplace functional}
We now compute the log-Laplace functional of the empirical mean process. Using the definition of $\bar A^{(n)}$, and independence of the sources, we have
\begin{align}
\E\exp\!\left(
n\int_0^t \theta(s)\bar A^{(n)}(s)\,ds
\right)
&=
\E\exp\!\left(
\sum_{i=1}^n \int_0^t \theta(s)A^{(i)}(s)\,ds
\right)\nonumber\\
&=
\prod_{i=1}^n
\E\exp\!\left(\int_0^t \theta(s)A^{(i)}(s)\,ds\right)\nonumber\\
&=
\exp\big(n\Lambda_t(\theta)\big).\label{eq:manysourcesLaplace}
\end{align}
Taking logarithms and dividing by $n$, we conclude that the scaled log-Laplace transform of $\bar A^{(n)}$ is given by $\Lambda_t(\theta)$.

\section{Main result}\label{sec:main_theorem}
We are now in a position to state and prove the main result of the paper.
\begin{theorem}\label{thm:main_ldp}
The sequence $\{\bar A^{(n)}\}_{n\ge1}$ satisfies a large deviations principle on $D([0,T])$, equipped with the Skorokhod $J_1$ topology, with speed $n$ and good rate function
\[
I_T(\phi)
=
\sup_{m\ge1}\sup_{0<t_1<\cdots<t_m\le T}
I_{t_1,\dots,t_m}\bigl(\phi(t_1),\dots,\phi(t_m)\bigr),
\]
where, for each $0<t_1<\cdots<t_m\le T$,
\[
I_{t_1,\dots,t_m}(x)
=
\sup_{\alpha\in\mathbb R^m}
\left\{
\sum_{\ell=1}^m \alpha_\ell x_\ell-\Lambda_{t_1,\dots,t_m}(\alpha)
\right\}.
\]
Moreover, the rate function admits the variational representation
\begin{equation}\label{eq:thm_variational_rate}
    I_T(\phi) = \sup_{\theta\in C([0,T]),\gamma\in\R} \left\{ \int_0^T \theta(t) \phi(t)\, dt + \gamma\phi(T) - \Lambda_T(\theta,\gamma)\right\},
\end{equation}
with $\Lambda_T(\theta,\gamma)$ given by
\begin{equation}
    \Lambda_T(\theta,\gamma):=\log \E \exp \left(\int_0^T \theta(t)A(t)\, dt + \gamma A(T)\right).
\end{equation}
\end{theorem}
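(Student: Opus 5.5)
The proof follows the Dawson--Gärtner projective-limit route combined with exponential tightness in the Skorokhod topology.

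\emph{Finite-dimensional LDPs.} Fix $0<t_1<\cdots<t_m\le T$. The vector $(\bar A^{(n)}(t_1),\dots,\bar A^{(n)}(t_m))$ is the empirical mean of $n$ i.i.d.\ copies of $(A(t_1),\dots,A(t_m))$. Its log-moment generating function is
\[
\Lambda_{t_1,\dots,t_m}(\alpha)=\log\E\exp\Bigl(\textstyle\sum_\ell\alpha_\ell A(t_\ell)\Bigr).
\]
Writing $\sum_\ell\alpha_\ell A(t_\ell)=\sum_k g_\alpha(A_k)$, with $g_\alpha$ a step function vanishing outside $(0,t_m]$, independence over $k$ and the bound in the proof of Proposition~\ref{prop:exp-moments} give
\[
\Lambda_{t_1,\dots,t_m}(\alpha)\le \bigl(e^{\sum_\ell|\alpha_\ell|}-1\bigr)\int_0^{T}\rho(u)\,du<\infty
\]
for every $\alpha\in\R^m$, by Assumption~\ref{ass:Per}. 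Cramér's theorem in $\R^m$ then yields an LDP with good convex rate function $I_{t_1,\dots,t_m}$. The same argument covers sets of times that include $0$, where $\bar A^{(n)}(0)=0$ trivially.

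\emph{Exponential tightness and identification of $I_T$.} By Proposition~\ref{prop:exp-moments}, $\Pbb(\bar A^{(n)}(T)>M)\le e^{-n(\lambda M-\Lambda(\lambda))}$, which controls the sup norm since the paths are nondecreasing. For the modulus $w'$, monotonicity reduces matters to bounding the increments over a grid of mesh $\delta$. Let $N_{s,t}=\sum_k\1_{\{A_k\in(s,t]\}}$. By Proposition~\ref{prop:incmass} and $\log(1+u)\le u$,
\[
\log\E e^{\lambda N_{s,t}}\le (e^\lambda-1)C(t-s).
\]
Chernoff's bound then gives
\[
\Pbb\bigl(\bar A^{(n)}(t)-\bar A^{(n)}(s)>\eta\bigr)\le \exp\bigl(-n[\lambda\eta-(e^\lambda-1)C\delta]\bigr).
\]
Taking a union bound over the $T/\delta$ grid cells (with overlapping windows of length $2\delta$), and choosing $\lambda$ large and then $\delta$ small, makes this $\le e^{-nM}$. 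The compactness criterion (bounded sup and $w'\to0$ uniformly) therefore gives exponential tightness in $J_1$. A subtlety is that $w'$ only tolerates jumps; here we in fact get near-continuity, which is stronger. With exponential tightness and the finite-dimensional LDPs for all finite time sets in hand, \cite[Thm 4.28/4.30]{FK06} (the projective-limit LDP in $D$ under exponential tightness) yields the LDP in $D([0,T])$ with good rate function $\sup_{m,t_i} I_{t_1,\dots,t_m}(\phi(t_i))$. One checks that the finite-dimensional projections are continuous at $\phi$ whenever $I_T(\phi)<\infty$, which forces $\phi\in C([0,T])$ by the increment estimate above. Equivalently, one can invoke the Puhalskii/Feng--Kurtz theorem that exponential tightness plus finite-dimensional LDPs on a dense set of times (including $T$) identify the rate function.

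\emph{Variational formula (main obstacle).} The proof of \eqref{eq:thm_variational_rate} proceeds in two inequalities.

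For ``$\ge$'' with $\phi$ continuous, approximate $\int_0^T\theta\phi\,dt+\gamma\phi(T)$ by the Riemann sums $\sum_\ell \alpha_\ell\phi(t_\ell)$ with $\alpha_\ell=\theta(t_\ell)(t_\ell-t_{\ell-1})$ and $\alpha_m$ augmented by $\gamma$. The corresponding $\Lambda_{t_1,\dots,t_m}(\alpha)$ converge to $\Lambda_T(\theta,\gamma)$ by dominated convergence. Domination holds because the Riemann sums of $\int\theta A$ are bounded by $\|\theta\|_\infty T A(T)$, which has all exponential moments.

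For ``$\le$'', a point evaluation $\sum\alpha_\ell\phi(t_\ell)$ is approximated by $\int\theta_\epsilon\phi$ with $\theta_\epsilon$ continuous bumps of mass $\alpha_\ell$ concentrated just right of $t_\ell$ (with the $t_m=T$ term carried by $\gamma$). Right-continuity of $\phi$ and of $A$ gives convergence of both the linear term and, via dominated convergence, of $\Lambda$.

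The delicate point is handling $\phi$ with $I_T(\phi)=\infty$ in the ``$\ge$'' direction, where $\phi$ may be discontinuous and Riemann approximation fails. I would treat this by showing directly that the right-hand side of \eqref{eq:thm_variational_rate} is infinite for discontinuous or non-monotone $\phi$. Choosing $\theta$ as a large multiple of a bump straddling a jump (or a decrease) while $\Lambda_T$ grows only linearly in the bump mass times $C\cdot$width, by the increment bound, produces the divergence.
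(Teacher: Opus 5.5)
Your proposal is correct. The first three steps follow the paper: the finite-dimensional LDP, then exponential tightness via monotonicity, increment Chernoff bounds and a grid union bound, then the projective-limit theorem. Using Cramér's theorem in $\R^m$ instead of Gärtner--Ellis is a harmless simplification. Your remark that the Feng--Kurtz projective theorem takes the supremum only over continuity points of $\phi$ is more careful than the paper's Theorem~\ref{thm:DG}. You reconcile this with the stated $I_T$ by noting that $I_T(\phi)<\infty$ forces $\phi\in C([0,T])$.

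The genuine difference is in the inequality $I_T\ge\mathcal V_T$. The paper gets it from Varadhan's lemma, which needs three ingredients:
\begin{itemize}
\item the full sample-path LDP;
\item the $J_1$-continuity of $F_{\theta,\gamma}$ (Lemma~\ref{lem:Ftheta-cont});
\item an exponential-moment condition.
\end{itemize}
You instead approximate $F_{\theta,\gamma}$ by right-endpoint Riemann sums. You then pass to the limit in $\Lambda_{t_1,\dots,t_m}(\alpha)$ by dominated convergence, with domination by $e^{(T\|\theta\|_\infty+|\gamma|)A(T)}$.

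Your route is purely finite-dimensional. It uses only the definition of $I_T$ as a projective supremum together with Proposition~\ref{prop:exp-moments}, and needs no LDP at all. The paper's route directly yields the dual identity $\Lambda_T(\theta,\gamma)=\sup_\phi\{F_{\theta,\gamma}(\phi)-I_T(\phi)\}$ and extends to other continuous, exponentially integrable functionals. Your converse inequality is the argument of Appendix~\ref{app:duality}: bumps just to the right of the $t_\ell$, with the terminal point carried by $\gamma$. Your version is slightly simpler, since you apply dominated convergence directly to the expectation rather than term by term in $k$.

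The obstacle you single out at the end is not real. When $I_T(\phi)=\infty$, the inequality $I_T(\phi)\ge\mathcal V_T(\phi)$ is vacuous. Moreover, $\mathcal V_T(\phi)=\infty$ already follows from your bump argument, which uses only right-continuity and so applies to every càdlàg $\phi$. Even the restriction to continuous $\phi$ in the Riemann-sum step is unnecessary. Every càdlàg $\phi$ is bounded with countably many discontinuities, hence Riemann integrable, so right-endpoint Riemann sums converge.

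So the last paragraph can simply be dropped. As written it is also imprecise in two ways. A single nonnegative bump does not detect a jump; you would need a difference of bumps on either side. And $\Lambda_T$ grows like $(e^\lambda-1)C$ times the width, not linearly in the mass.
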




The proof proceeds in four steps:
\begin{enumerate}
\item establish a finite-dimensional LDP via the G\"artner--Ellis theorem;
\item prove exponential tightness in $D([0,T])$;
\item combine these to obtain a sample-path LDP;
\item derive the dual variational representation of the rate function.
\end{enumerate}

For convenience, we first recall two standard results used to pass from finite-dimensional large deviations to a sample-path LDP.

\begin{theorem}[Theorem~4.1 of \cite{FK06}]\label{thm:FK}
Let $\{X^{(n)}\}$ be a sequence of processes in $D([0,T])$. Suppose that
\begin{enumerate}
\item there exists a dense set $T_0\subset[0,T]$ such that, for every $t\in T_0$, the sequence $\{X^{(n)}(t)\}$ is exponentially tight in $\R$;
\item for every $\varepsilon>0$,
\[
\lim_{\delta\downarrow0}\limsup_{n\to\infty}\frac1n
\log\Pbb\bigl(w'(X^{(n)},\delta,T)>\varepsilon\bigr)
=-\infty.
\]
\end{enumerate}
Then $\{X^{(n)}\}$ is exponentially tight in $D([0,T])$.
\end{theorem}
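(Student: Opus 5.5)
The plan is to build, for each $M<\infty$, an explicit relatively compact set in $D([0,T])$ from the two hypotheses, using the deterministic characterization of relative compactness in the Skorokhod space (see \cite[Theorem~3.6.3]{EK86}). That result says a set $\mathcal A\subset D([0,T])$ is relatively compact in the $J_1$ topology if and only if two conditions hold. First, for every $t$ in a dense subset of $[0,T]$, the set $\{x(t):x\in\mathcal A\}$ is bounded. Second, $\lim_{\delta\downarrow0}\sup_{x\in\mathcal A}w'(x,\delta,T)=0$. Since $D([0,T])$ is not itself compact, I will not try to handle closedness of sublevel sets of $w'$ directly. Instead I take the closure of a relatively compact set at the end: this is compact and still contains the event that we estimate.

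\textbf{Step 1 (countable skeleton and coordinate bounds).} Pick a countable dense set $\{s_j\}_{j\ge1}\subset T_0$, which is possible because every subset of $[0,T]$ is separable. Hypothesis (1) gives, for each $j$, a level $a_j$ and an index $N_j$ such that
\[
\Pbb\bigl(|X^{(n)}(s_j)|>a_j\bigr)\le e^{-n(M+j)}\qquad\text{for all } n\ge N_j.
\]
For the finitely many $n<N_j$, each real random variable $X^{(n)}(s_j)$ is tight. The target $e^{-n(M+j)}$ is strictly positive, so enlarging $a_j$ makes the bound hold for every $n\ge1$.

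\textbf{Step 2 (modulus bounds).} For each $k\ge1$, apply hypothesis (2) with $\varepsilon=1/k$ to get $\delta_k>0$ and $N'_k$ such that
\[
\Pbb\bigl(w'(X^{(n)},\delta_k,T)>1/k\bigr)\le e^{-n(M+k)}\qquad\text{for all } n\ge N'_k.
\]
For each fixed $n<N'_k$, every càdlàg path satisfies $w'(x,\delta,T)\to0$ as $\delta\downarrow0$. Hence $\Pbb(w'(X^{(n)},\delta,T)>1/k)\to0$, and shrinking $\delta_k$ makes the bound hold for all $n$ as well. This handling of the small-$n$ regime is the only delicate point. It matters because exponential tightness as defined requires a bound on the $\limsup$ only, but building one set that works simultaneously for all $j,k$ via a union bound is cleanest if each bound holds for every $n$.

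\textbf{Step 3 (assembling the compact set).} Define
\[
\mathcal A_M:=\bigl\{x\in D([0,T]):\ |x(s_j)|\le a_j\ \forall j,\ \ w'(x,\delta_k,T)\le 1/k\ \forall k\bigr\},
\]
and set $K_M:=\overline{\mathcal A_M}$. By the compactness criterion above, $\mathcal A_M$ is relatively compact, so $K_M$ is compact. A union bound then gives, for all $n$,
\[
\Pbb(X^{(n)}\notin K_M)\le\sum_{j\ge1}e^{-n(M+j)}+\sum_{k\ge1}e^{-n(M+k)}\le \frac{2e^{-n(M+1)}}{1-e^{-n}}.
\]
It follows that $\limsup_n\frac1n\log\Pbb(X^{(n)}\notin K_M)\le-(M+1)\le-M$, which is exponential tightness. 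The main obstacle I expect is matching the exact form of the Ethier--Kurtz criterion, namely verifying that boundedness on a countable dense set together with uniform control of $w'$ suffices for relative compactness on the finite horizon $[0,T]$, including the endpoint $T$. If needed, I would add $T$ to the skeleton. I would also check that $w'$ controls the oscillation near $T$, so that the bounds on the skeleton propagate to a uniform bound $\sup_{t\le T}|x(t)|$.
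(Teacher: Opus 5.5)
Your construction is the standard one behind the Feng--Kurtz result (the paper gives no proof, only the citation). However, the transfer from $D[0,\infty)$ to $D([0,T])$ breaks exactly at the endpoint you flagged, and neither of your remedies works as stated.

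Step~2 relies on $w'(x,\delta,T)\to0$ for every c\`adl\`ag $x$. That holds only for the Ethier--Kurtz/Billingsley modulus built on half-open intervals $[t_{i-1},t_i)$. With that modulus, Step~3 is false whenever $T\notin\{s_j\}$. For every $c\in\R$ the path $c\,\1_{\{T\}}\in D([0,T])$ vanishes on $[0,T)$ and has $w'(c\,\1_{\{T\}},\delta,T)=0$, so it lies in $\mathcal A_M$. Hence $\mathcal A_M$ is unbounded in sup norm and not relatively compact. On a fixed horizon the correct criterion (\cite[Section~12]{Bill99}) needs $\sup_{t\le T}|x(t)|$ bounded uniformly; given the modulus condition, this is equivalent to boundedness on a dense set \emph{containing} $T$. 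The half-open modulus never sees a jump at $T$. Under this reading the statement itself is false: $X^{(n)}:=n\,\1_{\{T\}}$ with $T_0=[0,T)\cap\mathbb Q$ satisfies (1) and (2) but is not exponentially tight. ``Adding $T$ to the skeleton'' would require exponential tightness of $X^{(n)}(T)$, which hypothesis (1) does not provide.

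With the paper's Definition~\ref{def:wprime}, which uses closed intervals $[t_{i-1},t_i]$, the endpoint is controlled, but Step~2 fails instead. If $t_{i-1}<s\le t_i$, the interval $[t_{i-1},t_i]$ contains $s$ and points just to its left, so $w'(x,\delta,T)\ge\sup_s|x(s)-x(s-)|$ for every $\delta$. You therefore cannot shrink $\delta_k$ to cover small $n$, and $\mathcal A_M\subset C([0,T])$. For the paper's $\bar A^{(n)}$, which jumps by $1/n$, this gives $\Pbb(\bar A^{(n)}\notin K_M)=1-\Pbb(A(T)=0)^n\to1$.

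The fix is to use the two moduli for different jobs:
\begin{itemize}
\item Let $\tilde w'$ be the half-open modulus. Since $\tilde w'\le w'$, hypothesis (2) transfers to $\tilde w'$. Since $\tilde w'(x,\delta,T)\to0$ for c\`adl\`ag $x$ and is monotone in $\delta$, your Steps~1--2 go through with $\tilde w'$.
\item For the endpoint, use the closed modulus. On $\{w'(X^{(n)},\delta,T)\le1\}$ every admissible last interval contains $[T-\delta,T]$, so $|X^{(n)}(T)|\le|X^{(n)}(s)|+1$ for a fixed $s\in T_0\cap[T-\delta,T)$. Combining (1) at this single $s$ with (2) shows that $\{X^{(n)}(T)\}$ is exponentially tight.
\item Only then may you add the constraint $|x(T)|\le b$ to $\mathcal A_M$. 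Together with the skeleton bounds and $\sup_{x\in\mathcal A_M}\tilde w'(x,\delta_k,T)\le1/k$, this gives a uniform sup-norm bound.
\end{itemize}
Billingsley's criterion then yields compactness, and your union bound finishes the argument.
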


The next result is the standard criterion combining finite-dimensional LDPs with exponential tightness to obtain a sample-path LDP. It follows, for example, from the Dawson--Gärtner projective limit theorem together with exponential tightness; see \cite[Section~4.6]{DZ98} and \cite[Section~4]{FK06}.

\begin{theorem}[LDP via finite-dimensional LDP and exponential tightness]\label{thm:DG}
Let $\{X^{(n)}\}$ be a sequence of random elements of $D([0,T])$. Suppose that, for every $0<t_1<\cdots<t_m\le T$, the finite-dimensional projections
\[
\bigl(X^{(n)}(t_1),\ldots,X^{(n)}(t_m)\bigr)
\]
satisfy an LDP in $\R^m$ with speed $n$ and good rate function $I_{t_1,\ldots,t_m}$, and that $\{X^{(n)}\}$ is exponentially tight in $D([0,T])$. Then $\{X^{(n)}\}$ satisfies an LDP in $D([0,T])$ with speed $n$ and good rate function
\[
I(\phi)
=
\sup_{m\ge1}\;
\sup_{0<t_1<\cdots<t_m\le T}
I_{t_1,\ldots,t_m}\bigl(\phi(t_1),\ldots,\phi(t_m)\bigr).
\]
\end{theorem}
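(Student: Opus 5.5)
The plan is the standard ``subsequential LDP plus identification of the rate'' argument. The Skorokhod space $D([0,T])$ with $J_1$ is Polish. Hence exponential tightness of $\{X^{(n)}\}$ implies, by the compactness theorem for large deviations (\cite[Lemma~4.1.23]{DZ98}, or \cite[Theorem~3.7]{FK06}), that every subsequence has a further subsequence satisfying an LDP in $D([0,T])$ with speed $n$ and some good rate function $J$. It then suffices to show that every such $J$ equals the function $I$ in the statement, since uniqueness of the rate function gives the full LDP. Along the subsequence I will use the local characterization
\[
J(\phi)=-\lim_{\varepsilon\downarrow0}\limsup_{n}\tfrac1n\log\Pbb\bigl(X^{(n)}\in B_\varepsilon(\phi)\bigr)=-\lim_{\varepsilon\downarrow0}\liminf_{n}\tfrac1n\log\Pbb\bigl(X^{(n)}\in B_\varepsilon(\phi)\bigr),
\]
where $B_\varepsilon(\phi)$ is a Skorokhod-metric ball.

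\emph{Step 1: $J\ge I$.} Fix $0<t_1<\dots<t_m\le T$. First take all $t_i$ to be continuity points of $\phi$; the endpoint $T$ is always admissible, since $\psi\mapsto\psi(T)$ is $J_1$-continuous. Then $\psi\in B_\varepsilon(\phi)$ forces $|\psi(t_i)-\phi(t_i)|\le\eta(\varepsilon)$, where $\eta(\varepsilon)\to0$ is governed by the oscillation of $\phi$ near $t_i$. Applying the finite-dimensional upper bound to the closed box of radius $\eta(\varepsilon)$, and using lower semicontinuity of $I_{t_1,\dots,t_m}$, gives $J(\phi)\ge I_{t_1,\dots,t_m}(\phi(t_1),\dots,\phi(t_m))$. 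Times at which $\phi$ jumps are reached by approximating each $t_i$ from the right by continuity points $s_i\downarrow t_i$. The key claim is
\[
I_{t_1,\dots,t_m}(\phi(t_1),\dots)\le\liminf I_{s_1,\dots,s_m}(\phi(s_1),\dots).
\]
To prove it, I apply the finite-dimensional LDP to the joint vector $(X^{(n)}(t_i),X^{(n)}(s_i))_i$. Exponential tightness in $D$ yields the modulus bound of Theorem~\ref{thm:FK}(2), that is, superexponential control of $w'(X^{(n)},\delta,T)$. I will use it to show that $\Pbb(|X^{(n)}(s)-X^{(n)}(t)|>\eta)$ is superexponentially small as $s\downarrow t$. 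This makes the two marginals exponentially equivalent in the limit, and the claim follows from the contraction principle for the joint rate.

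\emph{Step 2: $J\le I$.} We may assume $I(\phi)<\infty$. Choose $M>I(\phi)$ and a compact $K_M$ with $\limsup\frac1n\log\Pbb(X^{(n)}\notin K_M)\le -M$. The crucial topological lemma is the following. Let $\{t_i\}$ be a dense set of continuity points of $\phi$, together with $T$. Then for every $\varepsilon>0$ there exist finitely many of these times and a $\delta>0$ such that
\[
K_M\cap\{\psi:\ |\psi(t_i)-\phi(t_i)|<\delta,\ i\le m\}\subset B_\varepsilon(\phi).
\]
I will prove this by contradiction. A violating sequence in $K_M$ has a $J_1$-convergent subsequence, with limit $\psi$. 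On the continuity points of $\psi$ (co-countable), which meet the chosen dense set in a dense set, $\psi$ agrees with $\phi$, so $\psi=\phi$ by right-continuity. This contradicts the sequence staying outside $B_\varepsilon(\phi)$. The finite-dimensional lower bound on the open box then gives
\[
\liminf\tfrac1n\log\Pbb(X^{(n)}\in B_\varepsilon(\phi))\ge-\max\bigl(I_{t_1,\dots,t_m}(\phi(t_1),\dots),M\bigr)\ge -\max(I(\phi),M),
\]
and letting $M\to\infty$ yields $J(\phi)\le I(\phi)$. Goodness of $I$ is inherited from $J$.

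I expect the main obstacle to be the treatment of discontinuity times in Step~1. The $J_1$-projections are not continuous there, so the right-limit approximation must be carried out carefully using the modulus control furnished by exponential tightness. The compactness argument of Step~2 is more routine.
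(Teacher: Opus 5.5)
Your Step~1 fails at jump times of $\phi$. It cannot be repaired in this generality, because the statement with the supremum over \emph{all} times is false.

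Exponential tightness in $J_1$ does give superexponential control of the standard modulus $w'$ (with half-open intervals $[t_{i-1},t_i)$; with the closed intervals of Definition~\ref{def:wprime}, even that implication fails). But $w'$ tolerates a jump at a partition point, and such a jump may sit in $(t,s]$ at an $n$-dependent location. So nothing forces $\Pbb(|X^{(n)}(s)-X^{(n)}(t)|>\eta)$ to be small as $s\downarrow t$.

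Concretely, fix $t_0\in(0,T)$ and take the deterministic paths $X^{(n)}:=\mathbf 1_{[t_0+1/n,\,T]}$.
\begin{itemize}
\item They converge in $J_1$ to $x:=\mathbf 1_{[t_0,T]}$, so the sequence is exponentially tight.
\item Every vector $(X^{(n)}(t_1),\dots,X^{(n)}(t_m))$ is eventually equal to $v$, with $v_\ell=\mathbf 1_{\{t_\ell>t_0\}}$. Hence it satisfies an LDP whose good rate function is $0$ at $v$ and $+\infty$ elsewhere.
\item Yet $|X^{(n)}(s)-X^{(n)}(t_0)|=1$ for every fixed $s>t_0$ and all large $n$.
\item Also $I_s(x(s))=0$ for $s>t_0$, while $I_{t_0}(x(t_0))=I_{t_0}(1)=+\infty$, so your key inequality fails.
\end{itemize}
Worse, $I(\phi)<\infty$ would force $\phi=\mathbf 1_{(t_0,T]}$ on $(0,T]$, which is not c\`adl\`ag. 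Thus $I\equiv+\infty$ on $D([0,T])$, and the upper bound fails for $F=D([0,T])$.

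What your argument does prove (Step~1 at continuity times of $\phi$ and at $T$, plus Step~2) is the correct general theorem in the Feng--Kurtz form. There the supremum runs only over times at which $\phi$ is continuous, together with $T$. The paper's one-line appeal to Dawson--G\"artner hides the same point: evaluation maps are not $J_1$-continuous, so the projective-limit rate function does not transfer automatically.

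For the paper's application, the all-times formula is nevertheless correct and your route goes through. The estimate you need comes from the model, not from tightness. By \eqref{eq:emp-inc-exp-bound},
$\limsup_n\frac1n\log\Pbb(\bar A^{(n)}(s)-\bar A^{(n)}(t)\ge\eta)\le-\lambda\eta+(e^\lambda-1)C(s-t)$ for every $\lambda>0$. This tends to $-\infty$ as $s\downarrow t$ and then $\lambda\to\infty$. Alternatively, right-continuity of $A$ and dominated convergence give $\Lambda_{s_1,\dots,s_m}(\alpha)\to\Lambda_{t_1,\dots,t_m}(\alpha)$ as $s_i\downarrow t_i$. Taking $\liminf$ in $I_{s_1,\dots,s_m}(\phi(s_1),\dots,\phi(s_m))\ge\sum_\ell\alpha_\ell\phi(s_\ell)-\Lambda_{s_1,\dots,s_m}(\alpha)$ then yields your key inequality.

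Step~2 also needs two smaller repairs.
\begin{itemize}
\item A fixed countable dense set can lie entirely in the jump set of the limit $\psi$, since a c\`adl\`ag function can jump at every rational. Instead, use that $J_1$-convergence forces $\phi(t_i)\in\{\psi(t_i-),\psi(t_i)\}$, and let $t_i\downarrow u$ to get $\psi=\phi$.
\item Since $M>I(\phi)\ge I_{t_1,\dots,t_m}(\phi(t_1),\dots,\phi(t_m))$, the lower bound is directly $-I_{t_1,\dots,t_m}(\phi(t_1),\dots,\phi(t_m))\ge-I(\phi)$. As you wrote it, $-\max(I(\phi),M)=-M$, and letting $M\to\infty$ gives nothing.
\end{itemize}
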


The next four subsections carry out the four steps above.
\subsection{Finite-dimensional large deviations}\label{sec:finite_LDP}
We first establish an LDP for the finite-dimensional marginals of the empirical mean process. Fix $m\ge1$ and
$
0<t_1<\cdots<t_m\le T,$
and define
\begin{equation}\label{eq:fd-vector}
X^{(n)}
:=
\bigl(\bar A^{(n)}(t_1),\ldots,\bar A^{(n)}(t_m)\bigr)
\in\R^m.
\end{equation}
For $\alpha=(\alpha_1,\ldots,\alpha_m)\in\R^m$, let
\begin{equation}\label{eq:fd-logmgf}
\Lambda_{t_1,\ldots,t_m}(\alpha)
:=
\log\E\exp\!\left(
\sum_{\ell=1}^m\alpha_\ell A(t_\ell)
\right).
\end{equation}
Since the processes $A^{(1)},\ldots,A^{(n)}$ are i.i.d.,
\begin{align}
\frac1n
\log\E\exp\!\left(
n\sum_{\ell=1}^m\alpha_\ell\bar A^{(n)}(t_\ell)
\right)
&=
\frac1n
\log
\prod_{i=1}^n
\E\exp\!\left(
\sum_{\ell=1}^m\alpha_\ell A^{(i)}(t_\ell)
\right)\nonumber\\
&=
\Lambda_{t_1,\ldots,t_m}(\alpha).
\label{eq:fd-scaled-logmgf}
\end{align}
Hence
\begin{equation}\label{eq:fd-logmgf-limit}
\lim_{n\to\infty}
\frac1n
\log\E\exp\!\left(
n\sum_{\ell=1}^m\alpha_\ell\bar A^{(n)}(t_\ell)
\right)
=
\Lambda_{t_1,\ldots,t_m}(\alpha).
\end{equation}
We now derive an explicit expression for $\Lambda_{t_1,\dots,t_m}(\alpha)$. Set $t_0:=0$ and define
\begin{equation}\label{eq:beta-def}
\beta_\ell := \sum_{r=\ell}^m \alpha_r,
\qquad \ell=1,\dots,m.
\end{equation}
Then summation by parts gives
\begin{equation}\label{eq:alpha-beta-identity}
\sum_{\ell=1}^m \alpha_\ell A(t_\ell)
=
\sum_{\ell=1}^m \beta_\ell\big(A(t_\ell)-A(t_{\ell-1})\big).
\end{equation}
Indeed, expanding the right-hand side shows that the coefficient of $A(t_\ell)$ is precisely
$\beta_\ell-\beta_{\ell+1}=\alpha_\ell$, with the convention $\beta_{m+1}:=0$.
Now observe that
\begin{equation}\label{eq:increment-decomposition}
A(t_\ell)-A(t_{\ell-1})
=
\sum_{k\in\Z}\1_{\{A_k\in(t_{\ell-1},t_\ell]\}},
\qquad \ell=1,\dots,m.
\end{equation}
Substituting \eqref{eq:increment-decomposition} into \eqref{eq:alpha-beta-identity}, we obtain
\begin{equation}\label{eq:sum-over-k}
\sum_{\ell=1}^m \alpha_\ell A(t_\ell)
=
\sum_{k\in\Z}
\sum_{\ell=1}^m
\beta_\ell \1_{\{A_k\in(t_{\ell-1},t_\ell]\}}.
\end{equation}
For each fixed $k$, at most one of the events
$
\{A_k\in(t_{\ell-1},t_\ell]\}$, with 
$\ell=1,\dots,m$,
can occur. Define
\begin{equation}\label{eq:pkl-def}
p_{k,\ell}
:=
\Pbb\big(A_k\in(t_{\ell-1},t_\ell]\big)
=
F(t_\ell-k\Delta)-F(t_{\ell-1}-k\Delta),
\qquad \ell=1,\dots,m.
\end{equation}
It follows that, for each $k$,
\begin{align}
\E\exp\!\left(
\sum_{\ell=1}^m
\beta_\ell \1_{\{A_k\in(t_{\ell-1},t_\ell]\}}
\right)
&=
\Big(1-\sum_{\ell=1}^m p_{k,\ell}\Big)
+
\sum_{\ell=1}^m p_{k,\ell}e^{\beta_\ell}.
\label{eq:single-k-fd}
\end{align}
Hence, using \eqref{eq:sum-over-k} and the independence of the random variables $\{A_k\}_{k\in\Z}$,  
\begin{align}
\Lambda_{t_1,\dots,t_m}(\alpha)
&=
\log
\E\exp\!\left(
\sum_{k\in\Z}
\sum_{\ell=1}^m
\beta_\ell \1_{\{A_k\in(t_{\ell-1},t_\ell]\}}
\right)\nonumber=
\sum_{k\in\Z}
\log\!\left(
1 
+
\sum_{\ell=1}^m p_{k,\ell} (e^{\beta_\ell}-1)
\right).
\label{eq:fd-Lambda-explicit}
\end{align}
We next verify the assumptions needed for the Gärtner--Ellis theorem. 
Here it is directly seen that, by \eqref{eq:fd-scaled-logmgf}, the limiting logarithmic moment generating function exists. 

It remains to verify the hypotheses of the G\"artner--Ellis theorem. Since $A$ is nondecreasing,
\[
\sum_{\ell=1}^m\alpha_\ell A(t_\ell)
\le
\Bigl(\sum_{\ell=1}^m\alpha_\ell^+\Bigr)A(t_m),
\]
so
\[
\E\exp\!\left(\sum_{\ell=1}^m\alpha_\ell A(t_\ell)\right)
\le
\E\exp\!\left(
\Bigl(\sum_{\ell=1}^m\alpha_\ell^+\Bigr)A(t_m)
\right),
\]
which is finite by Proposition~\ref{prop:exp-moments}. Hence the effective domain of $\Lambda_{t_1,\ldots,t_m}$ is all of~$\R^m$.

Next,
\[
\Lambda_{t_1,\ldots,t_m}(\alpha)
=
\sum_{k\in\Z}g_k(\alpha),
\]
where
\[
g_k(\alpha)
:=
\log\!\left(
1+\sum_{\ell=1}^mp_{k,\ell}(e^{\beta_\ell}-1)
\right).
\]
If $K\subset\R^m$ is compact, then $|\beta_\ell|\le M$ on $K$ for some $M<\infty$, so
\[
1+\sum_{\ell=1}^mp_{k,\ell}(e^{\beta_\ell}-1)\ge e^{-M}.
\]
Consequently,
\[
|\partial_{\alpha_i}g_k(\alpha)|
\le
e^{2M}\sum_{\ell=1}^mp_{k,\ell},
\qquad
\alpha\in K.
\]
Since
\[
\sum_{k,\ell}p_{k,\ell}
=\E[A(t_m)]<\infty,
\]
the derivative series converges locally uniformly, and $\Lambda_{t_1,\ldots,t_m}$ is differentiable on~$\R^m$.

The G\"artner--Ellis theorem therefore applies, yielding an LDP for $\{X^{(n)}\}$ on $\R^m$ with speed $n$ and good convex rate function
\begin{equation}\label{eq:fd-rate-function}
I_{t_1,\ldots,t_m}(x)
=
\sup_{\alpha\in\R^m}
\left\{
\sum_{\ell=1}^m\alpha_\ell x_\ell
-
\Lambda_{t_1,\ldots,t_m}(\alpha)
\right\},
\qquad
x\in\R^m.
\end{equation}
\subsection{Exponential tightness}

We prove that the sequence $\{\bar A^{(n)}\}_{n\ge1}$ is exponentially tight in $D([0,T])$, equipped with the Skorokhod $J_1$ topology. To this end, we apply \cite[Theorem 4.1]{FK06}, which we stated, for completeness, in Theorem \ref{thm:FK}.

Recall Definition \ref{def:wprime} of the Skorokhod modulus $w'$ and let $T_0:=\mathbb{Q}\cap[0,T]$, which is dense in $[0,T]$. We now prove the two conditions of Theorem \ref{thm:FK}.
 
\medskip

\noindent Proof of (1).
Fix $t\in T_0$. We show that $\{\bar A^{(n)}(t)\}_{n\ge1}$ is exponentially tight in $\R$, that is,
\begin{equation}\label{eq:exp_tight}
\lim_{R\to\infty}\ \limsup_{n\to\infty}\ \frac1n
\log \Pbb(\bar A^{(n)}(t)\ge R)=-\infty.
\end{equation}

For any $R>0$ and $\lambda>0$, Chernoff's bound and independence across sources give
\begin{align}
\Pbb(\bar A^{(n)}(t)\ge R)
&=
\Pbb\Big(\sum_{i=1}^n A^{(i)}(t)\ge nR\Big)
\le
e^{-n\lambda R}\,\E\exp\Big(\lambda\sum_{i=1}^n A^{(i)}(t)\Big) \nonumber\\
&=
e^{-n\lambda R}\,\Big(\E e^{\lambda A(t)}\Big)^n
=
\exp\Big(n\big[-\lambda R+\log\E e^{\lambda A(t)}\big]\Big).\label{eq:marginal-chernoff}
\end{align}
By Proposition \ref{prop:exp-moments}, $\E e^{\lambda A(t)}<\infty$, for every $\lambda>0$. Then,
\[
\limsup_{n\to\infty}\frac1n\log\Pbb(\bar A^{(n)}(t)\ge R)
\le -\lambda R+\log\E e^{\lambda A(t)}.
\]
Letting $R\to\infty$ leads to the exponential tightness condition \eqref{eq:exp_tight}, concluding the proof of (1).

\medskip

\noindent Proof of (2). Fix $\varepsilon>0$. We prove the Feng--Kurtz condition (d):
\begin{equation}\label{eq:FKd-goal}
\lim_{\delta\downarrow0}\ \limsup_{n\to\infty}\ \frac1n
\log\Pbb\big\{w'(\bar A^{(n)},\delta,T)>\varepsilon\big\}=-\infty.
\end{equation}

We proceed in four steps:
\begin{enumerate}[(a)]
    \item We reduce the Skorokhod modulus $w'$ to increments, using the monotonicity of the paths.
    \item We derive a bound on the log-moment generating function of increments of the process.
    \item We obtain exponential tail bounds for increments of the empirical mean process via Chernoff's inequality. 
    \item We control the supremum over all short time intervals by a discretization argument.
\end{enumerate}

\noindent {\it Step 2(a).}
Recall that each $\bar A^{(n)}$ is nondecreasing and càdlàg. For a nondecreasing path $\phi$, the oscillation on any interval $[a,b]\subset[0,T]$ is simply
\[
\sup_{s,t\in[a,b]}|\phi(t)-\phi(s)|
=
\phi(b)-\phi(a).
\]

Now choose an admissible partition
$\Pi=\{0=t_0<t_1<\cdots<t_m=T\}$
such that
$
\delta \le t_i-t_{i-1}\le 2\delta$ for all 
$i=1,\dots,m.$
Using this partition in the definition of $w'$, we obtain
\[
w'(\phi,\delta,T)
\le
\max_i \sup_{s,t\in[t_{i-1},t_i]}|\phi(t)-\phi(s)|
=
\max_i \big(\phi(t_i)-\phi(t_{i-1})\big).
\]
Since each interval $[t_{i-1},t_i]$ has length at most $2\delta$, it follows that
\[
\phi(t_i)-\phi(t_{i-1})
\le
\sup_{\substack{0\le s<t\le T\\ t-s\le 2\delta}}
\big(\phi(t)-\phi(s)\big).
\]
Taking the maximum over $i$ yields
\begin{equation}\label{eq:wprime-inc-bound}
w'(\phi,\delta,T)\ \le\ \sup_{\substack{0\le s<t\le T\\ t-s\le 2\delta}}\big(\phi(t)-\phi(s)\big),
\qquad \phi\in D([0,T]).
\end{equation}

\noindent {\it Step 2(b)}.
Fix $0\le s<t\le T$. Since
\[
A(t)-A(s)=\sum_{k\in\Z}\1_{\{A_k\in(s,t]\}},
\]
and the indicators are independent across $k$, for $\lambda>0$,
\begin{align}
\log\E e^{\lambda(A(t)-A(s))}
&=
\sum_{k\in\Z}\log\Big(1+\Pbb(A_k\in(s,t])(e^\lambda-1)\Big)\nonumber\\
&\le (e^\lambda-1)\sum_{k\in\Z}\Pbb(A_k\in(s,t])\le (e^\lambda-1)\,C\,(t-s),\label{eq:one-stream-inc-mgf}
\end{align}
where we used $\log(1+u)\le u$ and Proposition \ref{prop:incmass}.

\noindent {\it Step 2(c).}
For $\eta>0$ and $\lambda>0$, Chernoff's bound and independence across sources yield
\begin{align}
\Pbb\big(\bar A^{(n)}(t)-\bar A^{(n)}(s)\ge \eta\big)
&=
\Pbb\Big(\sum_{i=1}^n (A^{(i)}(t)-A^{(i)}(s))\ge n\eta\Big)\nonumber\\
&\le
\exp(-n\lambda\eta)\,
\Big(\E e^{\lambda(A(t)-A(s))}\Big)^n.\label{eq:emp-inc-chernoff}
\end{align}
Combining \eqref{eq:emp-inc-chernoff} with \eqref{eq:one-stream-inc-mgf} gives
\begin{equation}\label{eq:emp-inc-exp-bound}
\frac1n\log \Pbb\big(\bar A^{(n)}(t)-\bar A^{(n)}(s)\ge \eta\big)
\le
-\lambda\eta+(e^\lambda-1)\,C\,(t-s).
\end{equation}

\noindent {\it Step 2(d).}
Fix $\delta>0$ and define the grid $u_j:=j\delta$ for $j=0,1,\dots,M$, where
\[
M:=\lceil T/\delta\rceil,
\]
and set $u_M:=T$.
Consider any $0\le s<t\le T$ such that $t-s\le 2\delta$. Choose $j\in\{0,\dots,M-1\}$ such that
$
s\in[u_j,u_{j+1}],
$
and define
$
r(j):= \min\{j+3, M\}.
$
Since $t-s\le 2\delta$ and $s\le u_{j+1}$, it follows that
\[
t\le s+2\delta\le u_{j+1}+2\delta. 
\]
If $j+3< M$, then $u_{j+1} + 2\delta = u_{j+3}$, and hence
$$
t\le u_{j+3} = u_{r(j)}.
$$
If $j+3 \ge M$, then $r(j) = M$ and $u_{r(j)} = T$, so trivially $t\le T = u_{r(j)}$.
Thus, in all cases
\[
u_j\le s<t\le u_{r(j)},
\]
(see Figure \ref{fig:grid_discretization}).
Since $\bar A^{(n)}$ is nondecreasing, we obtain
\[
\bar A^{(n)}(t)-\bar A^{(n)}(s)
\le
\bar A^{(n)}(u_{r(j)})-\bar A^{(n)}(u_j).
\]


Fix $\eta>0$. By the union bound,
\begin{align}
\Pbb\left(
\sup_{\substack{0\le s<t\le T\\ t-s\le 2\delta}}
(\bar A^{(n)}(t)-\bar A^{(n)}(s))\ge \eta
\right)
&\le
\sum_{j=0}^{M-1}
\Pbb\big(
\bar A^{(n)}(u_{r(j)})-\bar A^{(n)}(u_j)\ge \eta
\big).
\label{eq:union-short-intervals}
\end{align}
Since $u_{r(j)}-u_j\le 3\delta$, the bound \eqref{eq:emp-inc-exp-bound} yields
\[
\Pbb\big(
\bar A^{(n)}(u_{r(j)})-\bar A^{(n)}(u_j)\ge \eta
\big)
\le
\exp\Big(n\big[-\lambda\eta+(e^\lambda-1)C(3\delta)\big]\Big).
\]
Substituting into \eqref{eq:union-short-intervals}, we obtain
\begin{equation}\label{eq:union-bound}
\Pbb\left(
\sup_{\substack{0\le s<t\le T\\ t-s\le 2\delta}}
(\bar A^{(n)}(t)-\bar A^{(n)}(s))\ge \eta
\right)
\le
M\cdot
\exp\Big(n\big[-\lambda\eta+(e^\lambda-1)C(3\delta)\big]\Big).
\end{equation}
Taking $\frac1n\log$ and $\limsup_{n\to\infty}$, and using that $M$ does not depend on $n$, gives
\[
\limsup_{n\to\infty}\frac1n\log\Pbb\left(
\sup_{\substack{0\le s<t\le T\\ t-s\le 2\delta}}
(\bar A^{(n)}(t)-\bar A^{(n)}(s))\ge \eta
\right)
\le
-\lambda\eta+(e^\lambda-1)C(3\delta).
\]
Letting $\delta\downarrow0$ yields
\[
\lim_{\delta\downarrow0}\ \limsup_{n\to\infty}\frac1n\log\Pbb\left(
\sup_{\substack{0\le s<t\le T\\ t-s\le 2\delta}}
(\bar A^{(n)}(t)-\bar A^{(n)}(s))\ge \eta
\right)
\le -\lambda\eta.
\]
Since $\lambda>0$ is arbitrary, we conclude
\[
\lim_{\delta\downarrow0}\ \limsup_{n\to\infty}\frac1n\log\Pbb\left(
\sup_{\substack{0\le s<t\le T\\ t-s\le 2\delta}}
(\bar A^{(n)}(t)-\bar A^{(n)}(s))\ge \eta
\right)=-\infty.
\]
Combining this with \eqref{eq:wprime-inc-bound} yields the Feng--Kurtz condition \eqref{eq:FKd-goal}.

\begin{figure}[h]
\centering
\begin{tikzpicture}[x=1.5cm,y=1cm]

\draw[thick] (0,0) -- (5,0);

\foreach \x/\lab in {0/{u_j},1/{u_{j+1}},2/{u_{j+2}},3/{u_{j+3}},5/{T}}
{
    \filldraw (\x,0) circle (1.2pt);
    \draw (\x,0.08) -- (\x,-0.08);
    \node[below=5pt] at (\x,0) {$\lab$};
}

\draw[dashed, thick] (0,0.60) -- (3,0.60);
\draw[dashed] (0,0.45) -- (0,0.70);
\draw[dashed] (3,0.45) -- (3,0.70);
\node[above] at (1.5,0.55) {$[u_j,u_{j+3}]$};

\filldraw[blue] (0.6,0) circle (1.4pt);
\filldraw[blue] (2.1,0) circle (1.4pt);

\node[above=4pt, blue] at (0.6,0) {$s$};
\node[above=4pt, blue] at (2.1,0) {$t$};

\draw[blue, very thick] (0.6,-0.25) -- (2.1,-0.25);
\draw[blue] (0.6,-0.18) -- (0.6,-0.32);
\draw[blue] (2.1,-0.18) -- (2.1,-0.32);
\node[below=6pt, blue] at (1.35,-0.25) {$[s,t]$};
\node [below=6pt, blue] at (1.35,-0.70) {$t-s\le 2\delta$};

\end{tikzpicture}
\caption{Illustration of the discretization argument in Step 2(d), away from the
right endpoint. If \(s\in[u_j,u_{j+1}]\) and \(t-s\le 2\delta\), then
\(t\le u_{r(j)}\), where \(r(j)=\min\{j+3,M\}\). In the displayed case
\(r(j)=j+3\), so \([s,t]\subset[u_j,u_{j+3}]\).}
\label{fig:grid_discretization}
\end{figure}

\subsection{Proof of the sample-path LDP}

By Theorem~\ref{thm:DG}, the finite-dimensional LDP obtained above and the
exponential tightness proved in the previous subsection imply that
\(\{\bar A^{(n)}\}\) satisfies an LDP on \(D([0,T])\) with speed \(n\) and good
rate function
\begin{equation}
\label{eq:rate_function}
I_T(\phi)
=
\sup_{m\ge1}
\sup_{0<t_1<\cdots<t_m\le T}
I_{t_1,\dots,t_m}
\bigl(\phi(t_1),\dots,\phi(t_m)\bigr).
\end{equation}

\subsection{Variational representation of the rate function}\label{subsec:rate_function}

We now prove that the rate function $I_T$ in \eqref{eq:rate_function} has the variational form \eqref{eq:thm_variational_rate}. We split the proof in 4 steps.

\medskip

\noindent {\it Step 1: Continuity of the test functional.}
For $\theta\in C([0,T])$ and $\gamma\in\R$, define
\[
F_{\theta,\gamma}(\phi):=\int_0^T \theta(t)\phi(t)\,dt + \gamma \phi(T),
\qquad \phi\in D([0,T]).
\]
We also define the corresponding log-Laplace functional
\begin{equation}
    \Lambda_T(\theta,\gamma) := \log \E \exp\left(\int_0^T \theta(t) A(t)\, dt + \gamma A(T)\right).
\end{equation}
Notice that $\Lambda_T(\theta,0) = \Lambda_T(\theta)$.
\begin{lemma}\label{lem:Ftheta-cont}
For every $\theta\in C([0,T])$ and $\gamma\in\R$, the map $F_{\theta,\gamma}$ is continuous on $D([0,T])$ endowed with the Skorokhod $J_1$ topology.
\end{lemma}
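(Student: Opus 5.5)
The plan is to treat the two summands of $F_{\theta,\gamma}$ separately. The map $\phi\mapsto\int_0^T\theta(t)\phi(t)\,dt$ is continuous on $D([0,T])$. The map $\phi\mapsto\phi(T)$ is also continuous, because the Skorokhod time changes in $J_1$ on $[0,T]$ fix the endpoint $T$. Since $D([0,T])$ with the $J_1$ topology is metrizable, sequential continuity suffices. So we fix $\phi_n\to\phi$ in $J_1$ and show $F_{\theta,\gamma}(\phi_n)\to F_{\theta,\gamma}(\phi)$.

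By the definition of $J_1$ convergence, there are continuous strictly increasing bijections $\lambda_n:[0,T]\to[0,T]$ such that
\[
\sup_{t\in[0,T]}|\lambda_n(t)-t|\to0
\quad\text{and}\quad
\sup_{t\in[0,T]}|\phi_n(\lambda_n(t))-\phi(t)|\to0 .
\]
Since $\lambda_n(T)=T$, the second condition at $t=T$ gives $\phi_n(T)\to\phi(T)$ immediately, which handles the $\gamma$-term.

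For the integral term, two standard facts about $J_1$ convergence are used:
\begin{enumerate}
\item[(i)] the sequence is uniformly bounded, $\sup_n\|\phi_n\|_\infty<\infty$, since $\|\phi_n\|_\infty=\|\phi_n\circ\lambda_n\|_\infty\to\|\phi\|_\infty$ and $\phi$ is bounded as a càdlàg function on a compact interval;
\item[(ii)] $\phi_n(t)\to\phi(t)$ at every continuity point $t$ of $\phi$.
\end{enumerate}
To prove (ii), write
\[
|\phi_n(t)-\phi(t)|\le|\phi_n(t)-\phi(\lambda_n^{-1}(t))|+|\phi(\lambda_n^{-1}(t))-\phi(t)|.
\]
The first term is at most $\sup_u|\phi_n(\lambda_n(u))-\phi(u)|$. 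The second term tends to zero because $\lambda_n^{-1}(t)\to t$ and $\phi$ is continuous at $t$.

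A càdlàg function has at most countably many discontinuities, so (ii) gives $\phi_n\to\phi$ Lebesgue-a.e. on $[0,T]$. Also $\theta$ is bounded because it is continuous on a compact set. Dominated convergence, with dominating function $\|\theta\|_\infty\sup_n\|\phi_n\|_\infty$, then yields $\int_0^T\theta\phi_n\,dt\to\int_0^T\theta\phi\,dt$. Adding the two limits proves the lemma.

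The only mildly delicate step is (ii), the pointwise convergence at continuity points, which needs the decomposition above. An alternative is to change variables $t=\lambda_n(u)$ in the integral, but this would require the $\lambda_n$ to be absolutely continuous. The a.e. convergence route avoids that regularity issue, so it is the route I would take.
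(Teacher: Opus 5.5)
Your proof is correct and follows the same route as the paper. Evaluation at $T$ is continuous in $J_1$, and $J_1$ convergence yields pointwise convergence at continuity points of $\phi$, hence Lebesgue-a.e.; dominated convergence then handles the integral term. You also supply details the paper only cites or leaves implicit: the short proof of convergence at continuity points, and the uniform bound $\sup_n\|\phi_n\|_\infty<\infty$ that gives the dominating function.
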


\begin{proof}
If $\phi_n \to \phi$ in the Skorokhod $J_1$ topology, then $\phi_n(t)\to\phi(t)$ at every continuity point of $\phi$ (see, e.g., \cite[Chapter 3]{Bill99}). Since $\phi$ is càdlàg, this convergence holds for Lebesgue-a.e.\ $t\in[0,T]$. The map $\phi\mapsto \phi(T)$ is also continuous in the $J_1$ topology, see \cite[Section~3.3]{Kern2024}. The result then follows by dominated convergence.
\end{proof}

\medskip

\noindent {\it Step 2: Laplace transform limit.}
The same argument used for \eqref{eq:manysourcesLaplace} gives that for every $\theta\in C([0,T])$ and $\gamma\in\R$,
\begin{equation}\label{eq:scaled-laplace-limit}
\lim_{n\to\infty}\frac1n
\log \E \exp\!\left(
nF_{\theta,\gamma}(\bar A^{(n)})
\right)
=
\Lambda_T(\theta,\gamma).
\end{equation}

\medskip

\noindent {\it Step 3: Exponential integrability and Varadhan's lemma.}
We apply an extended form of Varadhan's lemma for continuous functionals with sufficient exponential integrability; see, e.g., \cite[Section 4.3]{DZ98}.

Let $p>1$. Since each path $\bar A^{(n)}$ is nonnegative and nondecreasing,
\[
F_{\theta,\gamma}(\bar A^{(n)})
\le
\int_0^T |\theta(t)|\,\bar A^{(n)}(t)\,dt + |\gamma|\bar A^{(n)}(T)
\le
\left(T\|\theta\|_\infty + |\gamma|\right)\bar A^{(n)}(T).
\]
Therefore,
\begin{align}
\E \exp\!\big(p\, nF_{\theta,\gamma}(\bar A^{(n)})\big)
&\le
\E \exp\!\big(p\, n\left(T\|\theta\|_\infty+|\gamma|\right)\bar A^{(n)}(T)\big)\nonumber\\
&=
\left(\E e^{p\left( T\|\theta\|_\infty +|\gamma|\right) A(T)}\right)^n.
\end{align}
By Proposition~\ref{prop:exp-moments}, the right-hand side is finite. Hence,
\[
\limsup_{n\to\infty}\frac1n
\log \E \exp\!\big(p nF_{\theta,\gamma}(\bar A^{(n)})\big)
<\infty.
\]
Thus, the exponential integrability condition required for the extended Varadhan lemma is satisfied.
Combining this with \eqref{eq:scaled-laplace-limit}, we obtain
\begin{equation} 
\Lambda_T(\theta,\gamma)
=
\sup_{\phi\in D([0,T])}
\left\{ F_{\theta,\gamma}(\phi) -I_T(\phi)
\right\}=
\notag
    \sup_{\phi\in D([0,T])}\left\{\int_0^T \theta(t)\phi(t)\, dt + \gamma\phi(T) - I_T(\phi)\right\},
\end{equation}
from which it follows that, for every $\phi\in D([0,T])$, $\theta\in C([0,T])$ and $\gamma\in\mathbb R$, 
\begin{equation}
    I_T(\phi)\ge \int_0^T\theta(t)\phi(t)\, dt + \gamma\phi(T) - \Lambda_T(\theta,\gamma).
\end{equation}
Taking the supremum over $\theta$ and $\gamma$ we obtain 
\begin{equation}
    \label{eq:first-ineq-variational}
    I_T(\phi) \ge \sup_{\theta\in C([0,T]), \gamma\in\R}\left\{ \int_0^T\theta(t)\phi(t)\, dt + \gamma\phi(T) - \Lambda_T(\theta,\gamma)\right\}.
\end{equation}
\noindent {\it Step 4: Converse inequality and variational form.}
Define
\[
\mathcal V_T(\phi)
:=
\sup_{\theta\in C([0,T]),\gamma\in\R}
\left\{
\int_0^T \theta(t)\phi(t)\,dt + \gamma\phi(T)
-
\Lambda_T(\theta,\gamma)
\right\}.
\]
We prove that $\mathcal V_T(\phi) \ge I_T(\phi)$. Fix $m\ge1$, $0<t_1<\cdots<t_m\le T$, and $\alpha\in\R^m$. We claim that 
\begin{equation}\label{eq:claim_approximation}
\mathcal V_T(\phi) \ge \sum_{\ell=1}^m \alpha_\ell \phi(t_\ell) -\Lambda_{t_1,\dots,t_m}(\alpha).
\end{equation}
Taking the supremum over $\alpha\in\R^m$ and over all partitions, we obtain $\mathcal V_T(\phi)\ge I_T(\phi)$.

\medskip

Combining both inequalities, we conclude that
$  I_T(\phi) = \mathcal{V}_T(\phi),$ 
which is exactly \eqref{eq:thm_variational_rate}.
The proof of \eqref{eq:claim_approximation} consists in an approximation argument and it is given in
Appendix~\ref{app:duality}.

\section{Examples}\label{sec:examples}
In this section we show our single assumption, Assumption \ref{ass:Per}, for two examples of the random variables $T_k$, namely, the \textit{Laplace} and \textit{Double Pareto unpunctuality}.

\subsection{Laplace unpunctuality}

Let $T_k$ have the \emph{Laplace} density
\[
f(x)=\frac{1}{2b}e^{-|x-\mu|/b},\qquad b>0.
\]
Then, for $u\in[0,T]$,
\[
\rho(u)
=\sum_{k\in\Z}\frac{1}{2b}e^{-|u-k\Delta-\mu|/b}.
\]
Since $M:=\sup_{u\in[0,T]}|u-\mu|<\infty$, we have
$
|u-k\Delta-\mu|\ge |k|\Delta-M,
$
and hence
\[
\rho(u)
\le
\frac{e^{M/b}}{2b}
\sum_{k\in\Z}e^{-|k|\Delta/b}
<\infty,
\]
because the series is geometric. Therefore, Assumption~\ref{ass:Per} holds.

\subsection{Double Pareto unpunctuality}
Let $T_k$ have the \emph{symmetric double Pareto} density satisfying, for some $\alpha>0$ and $C_0<\infty$,
\[
0\le f(x)\le C_0(1+|x|)^{-(\alpha+1)}, \qquad x\in\R.
\]
Then, for any $u\in[0,T]$,
\[
\rho(u)
=\sum_{k\in\Z}f(u-k\Delta)
\le
C_0\sum_{k\in\Z}(1+|u-k\Delta|)^{-(\alpha+1)}.
\]
Since $u\in[0,T]$, we have $1+|k|\Delta\le (1+T)(1+|u-k\Delta|)$, and therefore
\[
\rho(u)
\le
C_0(1+T)^{\alpha+1}
\sum_{k\in\Z}(1+|k|\Delta)^{-(\alpha+1)}.
\]
The series is finite because $\alpha>0$, so Assumption~\ref{ass:Per} holds.

\section{Workload large deviations}\label{sec:workload}
In this section, we derive a sample-path large deviations principle for the workload process.
Let $S^{(n)}(t)$ denote the cumulative service capacity up to time $t$, and define the scaled service process
\[
\bar{S}^{(n)}(t) := \frac{1}{n} S^{(n)}(t).
\]
The net-input process and workload process are defined as
\begin{align*}
    B^{(n)}(t) &:= \bar{A}^{(n)}(t) - \bar{S}^{(n)}(t),\\
    Q^{(n)}(t) &:= \sup_{0\le s \le t} \big( B^{(n)}(t) - B^{(n)}(s) \big).
\end{align*}
Our goal is to establish a sample-path LDP for $\{Q^{(n)}\}$ in $D([0,T])$.
\subsection{Constant service rate}

Assume first that the service rate is constant $c>0$. Then
\[
S^{(n)}(t) = nct,
\qquad
\bar{S}^{(n)}(t) = ct.
\]

\subsubsection*{(1) LDP for the net-input process}

Define the map $\Phi_c:D([0,T])\to D([0,T])$ by
\begin{equation}
\Phi_c(\phi)(t) = \phi(t) - ct.
\end{equation}

Since subtraction of a continuous function preserves the Skorokhod $J_1$ topology,
$\Phi_c$ is continuous (see, e.g., \cite[Chapter 3]{EK86}).
Moreover,
\[
B^{(n)} = \Phi_c(\bar A^{(n)}).
\]
By the contraction principle,
$\{B^{(n)}\}$ satisfies an LDP with good rate function
\begin{equation}
J_T(x) = I_T(\phi_x),
\end{equation}
where the path $\phi_x \in D([0,T])$ is defined by
$$\phi_x(t)  = x(t) + ct \qquad t\in [0,T].$$

\subsubsection*{(2) LDP for the workload process}

Define $\Psi:D([0,T])\to D([0,T])$ by
\begin{equation}\label{eq:Psi}
\Psi(\phi)(t) = \sup_{0\le s \le t} \big(\phi(t) - \phi(s)\big).
\end{equation}

The map $\Psi$ is continuous in the Skorokhod $J_1$ topology
(see \cite[Theorem 13.5.1]{Whitt2002}).

Since
\[
Q^{(n)} = \Psi(B^{(n)}),
\]
another application of the contraction principle yields an LDP for $\{Q^{(n)}\}$ with rate function
\begin{equation}
K_T(q) = \inf \{ J_T(x) : x\in D([0,T]),\ q=\Psi(x)\}.
\end{equation}

\subsubsection*{(3) Tilted arrival path associated with overflow}
We now identify a candidate path describing the most likely way in which a workload overflow occurs. Fix $a>0$. We seek a path that realizes the workload overflow event
$
Q^{(n)}(T)\ge a
$
with minimal large-deviation cost.

For a constant service rate $c$, overflow at time $t\in(0,T]$ requires
$
\phi(t)-ct=a.$
Let
\[
I_A[0,t](x) := \inf\{\,I_T(\phi):\phi(t)=x \,\}.
\]
By the finite-dimensional LDP from Section \ref{sec:finite_LDP},  $\bar A^{(n)}(t)$ satisfies an LDP with rate function
\[
I_A[0,t](x) = \sup_{\theta\in\mathbb R} \Bigl\{ \theta x - \Lambda_t(\theta) \Bigr\},
\]
where
$
\Lambda_t(\theta) := \log \E e^{\theta A(t)}.
$
Hence the minimal cost of producing overflow at time $t$ equals
\[
\inf_{\phi:\,\phi(t)\ge a+ct} I_T(\phi) = \inf_{x\ge a+ct} I_A[0,t](x).
\]
Since $I_A[0,t]$ is convex and attains its minimum at the typical value of $A(t)$, the infimum is achieved at the boundary,
\[
\inf_{\phi:\,\phi(t)\ge a+ct} I_T(\phi) = I_A[0,t](a+ct).
\]
Optimizing over all possible overflow times yields
\begin{equation}
\inf_{0<t\le T} I_A[0,t](a+ct).
\label{eq:overflow_decay_rate}
\end{equation}
Let $x:= a+ ct$ and let $\theta_t$ be such that
$
\Lambda_t'(\theta_t)=x,$
or, equivalently,
\[
\theta_t = \arg\max_{\theta\in\mathbb R} \Bigl\{ \theta x- \Lambda_t(\theta) \Bigr\}.
\]
The parameter $\theta_t$ is therefore the exponential tilting parameter associated with the rare event $\{A(t)\approx x\}$.
Define the exponentially tilted measure
\[
\frac{d\mathbb P^{\theta_t}}{d\mathbb P} = \exp\Bigl(\theta_tA(t)- \Lambda_t(\theta_t)\Bigr) = \frac{e^{\theta_t A(t)}}{\E[e^{\theta_t A(t)}]},
\]
and introduce the path
\begin{equation}
\phi^*_t(u) := \E^{\theta_t}[A(u)] = \frac{\E\left[A(u)e^{\theta_tA(t)}\right]}{\E e^{\theta_tA(t)}},\qquad u\in[0,T].
\label{eq:phi_star_definition}
\end{equation}
By construction,
$
\phi^*_t(t) = \E^{\theta_t}[A(t)] = \Lambda_t'(\theta_t) = x = a+ct.$
Moreover, since
\[
A(t)=\sum_{j\in\mathbb Z} I_j(t), \qquad I_j(t) = \mathbf{1}_{\{A_j \in (0,t]\}},
\]
we have
\[
e^{\theta_t A(t)} = \prod_{j\in\mathbb Z} e^{\theta_t I_j(t)}.
\]
Then, using the independence of the arrivals, 
\[
\mathbb E\!\left[I_k(u)e^{\theta_t A(t)}\right] = \mathbb E\!\left[I_k(u)e^{\theta_t I_k(t)}\right] \prod_{j\neq k} \mathbb E\!\left[e^{\theta_t I_j(t)}\right],
\]
and
\[
\mathbb E[e^{\theta_t A(t)}]=\prod_{j\in\mathbb Z}\mathbb E\!\left[e^{\theta_t I_j(t)}\right].
\]
Hence, 
\[
\phi^*_t(u) = \sum_{k\in\Z} \frac{\E \left[I_k(u) e^{\theta_t I_k(t)}\right]}{\E\left[e^{\theta_t I_k(t)}\right]}.
\]
Recall the definition 
$
q_k(t) :=  \P (0< A_k \le t). 
$
It follows that 
\[
\E\left[e^{\theta_t I_k(t)}\right] = 1+q_k(t)\bigl(e^{\theta_t}-1\bigr).
\]
We now discuss two cases. 

\medskip

\noindent Case 1: \(u\le t\).
Since $\{0< A_k \le u\} \subset \{ 0< A_k \le t\}$, we have
that $I_k(u) e^{\theta_t I_k(t)}$ and $e^{\theta_t} I_k(u)$ coincide, so that 
\[
\E\left[ I_k(u) e^{\theta_t I_k(t)} \right] = e^{\theta_t} \P (0 < A_k \le u). 
\]
It follows that
\[
\phi_t^*(u) = \sum_{k\in\Z} \frac{e^{\theta_t}q_k(u)}{1+ q_k(t) (e^{\theta_t} - 1)}, \qquad u\le t. 
\]

\medskip

\noindent Case 2: $u > t$.
We decompose $\{0< A_k \le u\} = \{ 0 < A_k \le t\} \cup \{ t < A_k \le u\}$. On the first event $I_k(t) = 1$, while on the second event $I_k(t) = 0$. Hence, 
\[
I_k(u) e^{\theta_t I_k(t)} = e^{\theta_t} \mathbf{1}_{\{0<A_k \le t\}} + \mathbf{1}_{\{t< A_k \le u\}}. 
\]
Consequently, 
\[
\phi_t^*(u) = \sum_{k\in\Z}\frac{e^{\theta_t} q_k(t) + \P(t< A_k \le u)}{1 + q_k(t) (e^{\theta_t} -1)}, \qquad u>t. 
\]
The following proposition identifies the large-deviation cost of the path
$\phi_t^*$.
\hypertarget{prop-overflowpath-target}{}
\begin{proposition}\label{prop:ldp_cost_overflowpath}
For every $t\in(0,T]$, the path $\phi_t^*$ defined by
\eqref{eq:phi_star_definition} satisfies
\[
I_T(\phi_t^*) = I_A[0,t](\phi_t^*(t)).
\]
\end{proposition}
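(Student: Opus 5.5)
We prove two inequalities. The lower bound $I_T(\phi_t^*)\ge I_A[0,t](\phi_t^*(t))$ is immediate from the definition of $I_T$ as a supremum over finite-dimensional rate functions: take $m=1$ and $t_1=t$ to get
\[
I_T(\phi_t^*)\ge I_t(\phi_t^*(t))=\sup_{\theta\in\R}\{\theta\,\phi_t^*(t)-\Lambda_t(\theta)\}=I_A[0,t](\phi_t^*(t)).
\]
(Equivalently, $I_A[0,t](x)=\inf\{I_T(\phi):\phi(t)=x\}\le I_T(\phi_t^*)$.)

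For the upper bound we must show, for every $m$, every $0<t_1<\cdots<t_m\le T$ and every $\alpha\in\R^m$,
\[
\sum_{\ell}\alpha_\ell\phi_t^*(t_\ell)-\Lambda_{t_1,\dots,t_m}(\alpha)\le \theta_t x-\Lambda_t(\theta_t),\qquad x=\phi_t^*(t).
\]
We may assume $t$ is among the $t_\ell$, since adding a time point with zero coefficient changes nothing. Use the change of measure $\Pbb^{\theta_t}$. By definition of the tilt,
\[
\Lambda_{t_1,\dots,t_m}(\alpha)=\log\E^{\theta_t}\exp\Bigl(\sum_\ell\alpha_\ell A(t_\ell)-\theta_tA(t)\Bigr)+\Lambda_t(\theta_t).
\]
Apply Jensen's inequality under $\Pbb^{\theta_t}$: $\log\E^{\theta_t}e^{Y}\ge\E^{\theta_t}Y$. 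Together with $\E^{\theta_t}A(t_\ell)=\phi_t^*(t_\ell)$ and $\E^{\theta_t}A(t)=x$, this gives
\[
\Lambda_{t_1,\dots,t_m}(\alpha)\ge\sum_\ell\alpha_\ell\phi_t^*(t_\ell)-\theta_tx+\Lambda_t(\theta_t),
\]
which is exactly the required inequality. Taking suprema over $\alpha$, over the partition and over $m$ yields $I_T(\phi_t^*)\le\theta_tx-\Lambda_t(\theta_t)$. The right-hand side equals $I_A[0,t](x)$ because $\theta_t$ solves $\Lambda_t'(\theta_t)=x$ and $\Lambda_t$ is differentiable and convex (Section~\ref{sec:finite_LDP}), so $\theta_t$ attains the Legendre supremum.

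The steps needing care are integrability and the existence of $\theta_t$. All moments used are finite under $\Pbb^{\theta_t}$: $A(t_\ell)\le A(T)$, and Proposition~\ref{prop:exp-moments} gives all exponential moments of $A(T)$, so $\E^{\theta_t}$ of each linear combination is finite and the Jensen step is legitimate. The existence of $\theta_t$ with $\Lambda_t'(\theta_t)=x$ is part of the setup preceding the statement, and we take it as given. Nothing else is an obstacle: the argument is the standard fact that the mean path under a tilt is a minimizer, realized here by Jensen's inequality.
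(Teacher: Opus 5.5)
Your proof is correct and is essentially the paper's argument: the paper bounds $\sum_\ell \alpha_\ell\phi_t^*(t_\ell)-\Lambda_{t_1,\dots,t_m}(\alpha)$ by the relative entropy $\E^{\theta_t}[\log(d\Pbb^{\theta_t}/d\Pbb)]=\theta_t x-\Lambda_t(\theta_t)$ via Gibbs' inequality, which is exactly your Jensen step under $\Pbb^{\theta_t}$ written out, and it uses the same one-point projection at time $t$ for the lower bound. Your integrability check is a detail the paper leaves implicit, while your reduction to $t\in\{t_1,\dots,t_m\}$ is harmless but unnecessary.
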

\begin{proof}
See Appendix~\ref{app:overflow}.
\end{proof}

In particular, for $t^* := \arg\min_{0 < t \leq T} I_A[0,t](a+ct)$,
\[
I_T(\phi_{t^*}^*) = I_A[0,t^*](a+ct^*) = \inf_{0<t\le T} I_A[0,t](a+ct).
\]
Therefore, the path
$
\phi^* := \phi_{t^*}^*
$
has the same cost as the optimal one-dimensional overflow event \eqref{eq:overflow_decay_rate}.

\begin{remark}
The associated workload path
$
\beta^*(u)=\phi^*(u)-cu
$
builds up until
$
\beta^*(t^*)=a.
$
Thus, in the deterministic-service case, the rare event is realized by an atypical
accumulation of arrivals over the interval \([0,t^*]\), with total mass
\[
\phi^*(t^*)=a+ct^*.
\]
\end{remark}

Figure~\ref{fig:tilted-overflow-path} illustrates the tilted arrival path. The simulated conditional mean, obtained by conditioning on overflow occurring close to $t^*$, agrees well with the theoretical tilted path. This supports the interpretation of $\phi^*_{t^*}$ as the typical arrival trajectory leading to a rare overflow event.

\begin{figure}[h]
    \centering
    \includegraphics[width=0.8\linewidth]{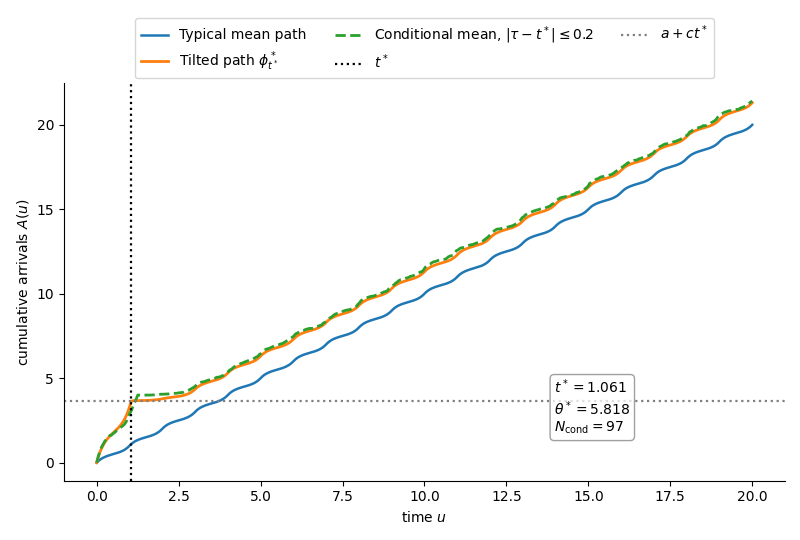}
    \caption{Numerical illustration of the deterministic-service overflow path. The typical mean path $\E[A(u)]$ is compared with the theoretical tilted path $\phi^*_{t^*}$ and with the empirical mean of simulated paths conditioned on overflow occurring within the window $|\tau - t^*| \le 0.2$. The dotted vertical line indicates the optimal overflow time $t^*$, and the dotted horizontal line indicates the required arrival level $a + ct^*$. $N_{\mathrm{cond}}=97$ means that 97 out of $10^6$ simulated paths satisfied the conditioning event. Parameters: $\Delta = 1$, $b=0.25$, $T=20$, $c=1.1$, $a=2.5$.}
    \label{fig:tilted-overflow-path}
\end{figure}
\subsection{Random service times}

Let $(S_i)_{i\ge1}$ be i.i.d.\ positive random variables with log-mgf
$
\Lambda_S(\theta) := \log \E e^{\theta S_1}.
$
Define
\[
V_n := \sum_{i=1}^n S_i,
\qquad
S(t) := \sup\{n\ge0: V_n \le t\}.
\]
The rescaled service process is
\[
\bar S^{(n)}(t) := \frac{1}{n} S(nt).
\]

\subsubsection*{(1) LDP for the service process}

Define the scaled partial-sum process
\[
\bar V^{(n)}(u) := \frac{1}{n} V_{\lfloor nu\rfloor}, 
\]
for $u \in [0,U]$.

By Mogulskii’s theorem (see \cite[Theorem 5.1.2]{DZ98}),
$\{\bar V^{(n)}\}$ satisfies an LDP with rate function
\[
I_V(\phi)
=
\begin{cases}
\int_0^U \Lambda_S^*(\phi'(u))\,du,
& \phi\in AC,\ \phi(0)=0,\\
+\infty & \text{otherwise}.
\end{cases}
\]
With the mapping
$
\Gamma(\phi)(t) := \sup\{u: \phi(u)\le t\}$, we have
that $\bar S^{(n)} = \Gamma(\bar V^{(n)})$.

On the set of strictly increasing absolutely continuous paths,
$\Gamma$ is continuous (see \cite[Section 13]{Whitt2002}), and the contraction principle applies.

Hence $\{\bar S^{(n)}\}$ satisfies an LDP with rate function
\[
L_T(\psi)
=
\begin{cases}
\int_0^T \psi'(t)\,\Lambda_S^*\!\left(\frac{1}{\psi'(t)}\right)\,dt,
& \psi\in AC,\ \psi(0)=0,\ \psi \text{ nondecreasing},\\
+\infty & \text{otherwise}.
\end{cases}
\]

\subsubsection*{(2) LDP for the net-input process}

Assume $\bar A^{(n)}$ and $\bar S^{(n)}$ are independent.
Then we have that
$
\smash{(\bar A^{(n)}, \bar S^{(n)})}
$
satisfies an LDP with rate function
\[
(\phi,\psi)\mapsto I_T(\phi)+L_T(\psi).
\]
Define
$
\Phi(\phi,\psi) := \phi - \psi.$
Although $\Phi$ is not continuous on all of $D\times D$ under $J_1$,
it is continuous on the effective domain where $\psi$ is absolutely continuous.
Therefore, the contraction principle applies.
Hence $\{B^{(n)}\}$ satisfies an LDP with rate
\[
J_T(x)
=
\inf\{I_T(\phi)+L_T(\psi): x=\phi-\psi\}.
\]

\subsubsection*{(3) LDP for the workload process}

Recall the definition of $\Psi$ in \eqref{eq:Psi}. Since $Q^{(n)}=\Psi(B^{(n)})$, another application of the contraction principle yields
\[
K_T(q)
=
\inf\{J_T(x): q=\Psi(x)\}.
\]
Equivalently,
\[
K_T(q)
=
\inf\{I_T(\phi)+L_T(\psi): q=\Psi(\phi-\psi)\}.
\]

\begin{remark}
The case of constant service can be recovered as a degenerate instance of the random service model. Indeed, suppose that the service times are deterministic, i.e.\ $S_i \equiv 1/c$ for some $c>0$. Then the log-moment generating function is
\[
\Lambda_S(\theta) = \log \E e^{\theta S_1} = \frac{\theta}{c}.
\]
Its Legendre transform $\Lambda_S^*(x)$ is $0$ if $x = 1/c$ and $\infty$ otherwise. 
Substituting this into the rate function $L_T$, we obtain
\[
L_T(\psi)
=
\int_0^T \psi'(t)\,\Lambda_S^*\!\left(\frac{1}{\psi'(t)}\right)\,dt.
\]
Hence, this quantity is finite if and only if $\psi'(t)=c$ almost everywhere. Therefore,
\[
L_T(\psi)
=
\begin{cases}
0, & \text{if } \psi(t)=ct \text{ for all } t\in[0,T],\\
+\infty, & \text{otherwise}.
\end{cases}
\]
Hence the only admissible service path is the deterministic trajectory $t \mapsto ct$, and the random service model reduces to the constant service case.
\end{remark}

\begin{remark}
The rate function $K_T(q)$ naturally characterizes the most likely path to overflow in the random-service setting, analogous to the deterministic-service case. Specifically, the decay rate of ${\mathbb P}(Q^{(n)}(t)\ge a)$ is found by minimizing the joint cost $I_T(\phi)+L_T(\psi)$ over all arrival and service paths satisfying
$
\smash{\sup_{0\le s\le t}\bigl[\phi(t)-\phi(s)-\psi(t)+\psi(s)\bigr]\ge a.}
$
Unlike the deterministic-service case, a rare overflow may result from both an atypically large arrival path and an atypically small service path, so the optimal trajectory is the joint minimizer over $(\phi,\psi)$. We do not characterize this minimizer explicitly.
\end{remark}

\appendix 
\section{Approximation of finite-dimensional duals}\label{app:duality}
In this appendix, we prove the approximation argument used in
Step~4 of Section \ref{subsec:rate_function}. The idea is to approximate the values $\phi(t_\ell)$ at non-terminal points by integrals of the form $\smash{\int_0^T \theta_n(t)\phi(t)\,dt}$, while the terminal value, when present, is represented by the term $\gamma \phi(T)$.

Fix $m\ge1$, $0<t_1< \dots <t_m\le T$, and $\alpha\in\mathbb{R}^m$.  We prove that 
\begin{equation}
    \mathcal{V}_T(\phi) \ge \sum_{\ell=1}^m \alpha_\ell \phi(t_\ell)- \Lambda_{t_1,\dots,t_m}(\alpha).
\end{equation}

We distinguish two cases. 

\medskip

\noindent {\it Case 1: $t_m<T$.}
Let $\gamma = 0$. Let $\eta\in C([0,1])$ be nonnegative with $\int_0^1 \eta(u)\,du=1$, and $\eta(0)=\eta(1)=0$. For $\ell=1,\dots,m$, and $n$ sufficiently large,  define
\[
\eta_{n,\ell}(t)
:=
n\,\eta\big(n(t-t_\ell)\big)\mathbf{1}_{[t_\ell,t_\ell+1/n]}(t),
\]
and set
\[
\theta_n(t):=\sum_{\ell=1}^m \alpha_\ell \eta_{n,\ell}(t).
\]
Then $\theta_n\in C([0,T])$ and $\int_0^T \eta_{n,\ell}(t)\,dt=1$.
Moreover, since $\phi$ is càdlàg and hence right-continuous, we have
\[
\lim_{n\to\infty} \int_0^T \eta_{n,\ell}(t)\phi(t)\,dt = \phi(t_\ell),
\qquad \ell=1,\dots,m.
\]
Therefore,
\begin{equation}\label{eq:app-values-case1}
\lim_{n\to\infty}\int_0^T \theta_n(t)\phi(t)\,dt
= \lim_{n\to\infty}
\sum_{\ell=1}^m \alpha_\ell \int_0^T \eta_{n,\ell}(t)\phi(t)\,dt
 = 
\sum_{\ell=1}^m \alpha_\ell \phi(t_\ell).
\end{equation}
Define
\[
H_n(s):=\int_s^T \theta_n(r)\,dr.
\]

Set $t_0 := 0$ and
\[
\beta_\ell := \sum_{r=\ell}^m \alpha_r,
\qquad \ell = 1,\dots,m.
\]

Then, for all $s \in (0,T)\setminus\{t_1,\dots,t_m\}$, we have 
$H_n(s) \longrightarrow H(s)$ as $n\to\infty$,
where the limit function $H$ is given by
\[
H(s)
:=
\sum_{\ell=1}^m \beta_\ell \mathbf{1}_{(t_{\ell-1},t_\ell]}(s).
\]
Moreover,
\[
|H_n(s)| \le M := \sum_{\ell=1}^m |\alpha_\ell|,
\qquad s \in [0,T],\ n \ge 1.
\]
For $k \in \mathbb{Z}$, define
\[
u_{n,k}
:=
\int_0^T p_k(s)\bigl(e^{H_n(s)} - 1\bigr)\,ds,
\qquad
u_k
:=
\int_0^T p_k(s)\bigl(e^{H(s)} - 1\bigr)\,ds.
\]

Since $H_n(s)\to H(s)$ pointwise and $|H_n(s)|\le M$, we have
$
|e^{H_n(s)} - 1| \le C_M$
for some constant $C_M>0$. Hence, by dominated convergence,
$
u_{n,k} \longrightarrow u_k$ as $n \to \infty$,
for any fixed $k$.
Using the explicit form of $H$, we obtain
\[
u_k
=
\sum_{\ell=1}^m (e^{\beta_\ell}-1)
\int_{t_{\ell-1}}^{t_\ell} p_k(s)\,ds.
\]
Define
\[
p_{k,\ell}
:=
\int_{t_{\ell-1}}^{t_\ell} p_k(s)\,ds,
\]
so that
\[
u_k = \sum_{\ell=1}^m (e^{\beta_\ell}-1)p_{k,\ell}.
\]
Hence,
\[
1+u_k
=
1-\sum_{\ell=1}^m p_{k,\ell}
+
\sum_{\ell=1}^m p_{k,\ell}e^{\beta_\ell}.
\]
Recalling the definition of the finite-dimensional Laplace functional,
we conclude that
\begin{equation}\label{eq:uk-limit}
\sum_{k \in \mathbb{Z}} \log(1+u_k)
=
\Lambda_{t_1,\dots,t_m}(\alpha).
\end{equation}
We now pass to the limit inside the infinite sum.
From the bound $|H_n(s)|\le M$, we obtain
\[
|u_{n,k}|
\le
C_M\,\mathbb{P}(A_k \in (0,T]),
\qquad
|u_k|
\le
C_M\,\mathbb{P}(A_k \in (0,T]).
\]
Moreover, since $H_n(s)\ge -M$ and $H(s)\ge -M$,
\[
1+u_{n,k} \ge e^{-M},
\qquad
1+u_k \ge e^{-M}.
\]
Thus the logarithm is Lipschitz on $[e^{-M},\infty)$, and there exists
$L_M > 0$ such that
\[
|\log(1+u_{n,k}) - \log(1+u_k)|
\le
L_M\,|u_{n,k}-u_k|.
\]
Furthermore,
\[
|u_{n,k}-u_k|
\le
2C_M\,\mathbb{P}(A_k \in (0,T]).
\]
Since
\[
\sum_{k \in \mathbb{Z}} \mathbb{P}(A_k \in (0,T]) < \infty,
\]
we can apply dominated convergence for series to conclude that
\[
\sum_{k \in \mathbb{Z}} \log(1+u_{n,k})
\longrightarrow
\sum_{k \in \mathbb{Z}} \log(1+u_k).
\]
Together with \eqref{eq:uk-limit}, this yields
\begin{equation}\label{eq:app-lambda-limit-case1}
\lim_{n\to\infty}\Lambda_T(\theta_n,0) = \Lambda_{t_1,\dots,t_m}(\alpha).
\end{equation}
Finally, by the definition of $\mathcal V_T(\phi)$,
\begin{equation}
\mathcal V_T(\phi)
\ge
\int_0^T \theta_n(t)\phi(t)\,dt - \Lambda_T(\theta_n,0).
\end{equation}
Passing to the limit and using \eqref{eq:app-values-case1} and
\eqref{eq:app-lambda-limit-case1}, we obtain
\begin{equation} \label{eq:ineqV}
\mathcal V_T(\phi)
\ge
\sum_{\ell=1}^m \alpha_\ell \phi(t_\ell)
-
\Lambda_{t_1,\dots,t_m}(\alpha).
\end{equation}

\medskip

\noindent {\it Case 2: $t_m = T$.}
Set $\gamma := \alpha_m$. If $m=1$, set $\theta_n \equiv 0$. If $m\ge 2$, define
\begin{equation}
    \theta_n(t) := \sum_{\ell = 1}^{m-1} \alpha_\ell\eta_{n,\ell}(t),
\end{equation}
where $\eta_{n,\ell}$ is defined as in Case 1. Then, 
\begin{equation}\label{eq:app-values-case2}
  \lim_{n\to\infty}  \int_0^T \theta_n(t)\phi(t) \, dt + \gamma \phi(T) = \sum_{\ell = 1}^{m-1}\alpha_\ell \phi(t_\ell) + \alpha_m \phi(T) = \sum_{\ell=1}^m \alpha_\ell \phi(t_\ell).
\end{equation}
Again define 
\begin{equation}
    H_n(s):=\int_s^T \theta_n(r) \, dr. 
\end{equation}
Set $t_0 := 0$ and 
\begin{equation}
    \beta_\ell := \sum_{r=\ell}^m \alpha_r, \qquad \ell=1,\dots,m.
\end{equation}
Then, for all $s\in (0,T)\setminus\{t_1,\dots,t_{m-1}\}$, 
\begin{equation}
    \lim_{n\to\infty} \gamma + H_n(s) = H(s),
\end{equation}
with the limit function $H$ given by
\begin{equation}
    H(s):= \sum_{\ell=1}^m \beta_\ell \mathbf 1_{(t_{\ell -1}, t_\ell]}(s).
\end{equation}
Indeed, on the last interval $(t_{m-1}, T]$, the limiting contribution of the integral term is zero, while the terminal term gives 
 $
    \gamma = \alpha_m = \beta_m.$
Moreover, $\gamma + H_n(s)$ is uniformly bounded in $n$ and $s$. 
Therefore, the same dominated-convergence argument of Case 1, with $H_n$ replaced by $\gamma + H_n$ gives 
\begin{equation}
   \lim_{n\to\infty} \Lambda_T(\theta_n, \gamma) = \Lambda_{t_1,\dots,t_m}(\alpha).
\end{equation}
Together with \eqref{eq:app-values-case2}, and using the definition of $\mathcal V_T$, we obtain the counterpart of \eqref{eq:ineqV} for $t_m=T.$

\medskip
For both $t_m<T$ and $t_m=T$, we have now proved \eqref{eq:app-values-case2}.
Taking the supremum over $\alpha \in \R^m$ yields
\begin{equation}
    \mathcal{V}_T(\phi) \ge I_{t_1,\dots,t_m}\bigl(\phi(t_1),\dots,\phi(t_m)\bigr).
\end{equation}
Finally, taking the supremum over all $m\ge 1$ and all $0<t_1<\cdots,t_m\le T$, we obtain
$$
\mathcal V_T (\phi) \ge I_T(\phi), 
$$
which is the desired converse inequality. 
\section{Proof of Proposition \ref*{prop:ldp_cost_overflowpath}}\label{app:overflow}

In this section we prove Proposition \hyperlink{prop-overflowpath-target}{\ref*{prop:ldp_cost_overflowpath}}. 

\begin{proof}

Fix \(t\in(0,T]\) and let
$
x:=\phi_t^*(t).
$
By construction, $
\phi_t^*(t) = \E^{\theta_t}[A(t)] = \Lambda_t'(\theta_t).$
We prove
\[
I_T(\phi_t^*)=I_A[0,t](x).
\]

First, since the projective representation of \(I_T\) contains the one-dimensional projection at time \(t\), we have
\[
I_T(\phi_t^*) \ge I_A[0,t]\bigl(\phi_t^*(t)\bigr) = I_A[0,t](x).
\]
It remains to prove the reverse inequality. Recall that
\[
I_T(\phi) = \sup_{m\ge1} \sup_{0<t_1<\cdots<t_m\le T} I_{t_1,\ldots,t_m} \bigl( \phi(t_1),\ldots,\phi(t_m) \bigr),
\]
where
\[
I_{t_1,\ldots,t_m}(y) = \sup_{\alpha\in\mathbb R^m} \left\{ \sum_{\ell=1}^m \alpha_\ell y_\ell - \Lambda_{t_1,\ldots,t_m}(\alpha) \right\}.
\]
Fix \(0<t_1<\cdots<t_m\le T\) and
\(\alpha=(\alpha_1,\ldots,\alpha_m)\in\mathbb R^m\). Define
\[
F = \sum_{\ell=1}^m \alpha_\ell A(t_\ell).
\]
Then, since \(\phi_t^*(u)=\E^{\theta_t}[A(u)]\),
\[
\E^{\theta_t}[F] = \sum_{\ell=1}^m \alpha_\ell \phi_t^*(t_\ell).
\]
We use Gibbs' inequality in the form
\[
\E^{\theta_t}[F]-\log \E[e^F] \le \E^{\theta_t} \left[ \log \frac{d\mathbb P^{\theta_t}}{d\mathbb P} \right].
\]
Therefore,
\[ \sum_{\ell=1}^m \alpha_\ell \phi_t^*(t_\ell) - \Lambda_{t_1,\ldots,t_m}(\alpha) \le \E^{\theta_t}\left[ \log\frac{d\mathbb P^{\theta_t}}{d\mathbb P} \right].
\]
Taking the supremum over \(\alpha\in\mathbb R^m\) yields
\[ I_{t_1,\ldots,t_m} \bigl(\phi_t^*(t_1),\ldots,\phi_t^*(t_m)\bigr)\le\E^{\theta_t}\left[\log\frac{d\mathbb P^{\theta_t}}{d\mathbb P}\right].
\]

Now compute the right-hand side. Since
\[
\frac{d\mathbb P^{\theta_t}}{d\mathbb P} =\exp\left(\theta_tA(t)-\Lambda_t(\theta_t)\right),
\]
we have
\[
\E^{\theta_t}\left[\log\frac{d\mathbb P^{\theta_t}}{d\mathbb P}
\right] = \E^{\theta_t} \left[ \theta_tA(t)-\Lambda_t(\theta_t)\right]=\theta_t\E^{\theta_t}[A(t)]-\Lambda_t(\theta_t).
\]
Using
$
\E^{\theta_t}[A(t)]=\phi_t^*(t)= x,
$
we obtain
\[
\E^{\theta_t}\left[\log\frac{d\mathbb P^{\theta_t}}{d\mathbb P}\right]=\theta_tx-\Lambda_t(\theta_t).
\]
Since \(\theta_t\) is chosen such that
$
\Lambda_t'(\theta_t)=x,$
it is the maximizer in the Legendre transform defining \(I_A[0,t](x)\). Therefore,
$\theta_tx-\Lambda_t(\theta_t)= I_A[0,t](x).$
Thus, for every finite collection \(0<t_1<\cdots<t_m\le T\),
\[
I_{t_1,\ldots,t_m}\bigl(\phi_t^*(t_1),\ldots,\phi_t^*(t_m)\bigr)
\le I_A[0,t](x).
\]
Taking the supremum over all finite collections of times gives
\[
I_T(\phi_t^*)\le I_A[0,t](x).
\]

Upon combining the lower and upper bounds, we conclude that
\[
I_T(\phi_t^*)=I_A[0,t](x)=I_A[0,t]\bigl(\phi_t^*(t)\bigr).
\]
This completes the proof.
    
\end{proof}
\bibliographystyle{alpha}
\bibliography{preamble/BigBib}
\end{document}

%% file: preamble/packages.tex
\usepackage{graphicx}

\usepackage{mathscinet}

\usepackage[english]{babel}
\usepackage{csquotes}

\usepackage{microtype} 
\usepackage[title,titletoc,page]{appendix}
\usepackage[a4paper,left=3cm,right=3cm,top=2cm,bottom=4cm,bindingoffset=5mm]{geometry}
\usepackage[colorlinks=true,linkcolor=blue,citecolor=black,linktocpage=true]{hyperref}

\usepackage{amsfonts}
\usepackage{amsmath}
\usepackage{amssymb}
\usepackage{dsfont}
\usepackage{mathtools}
\mathtoolsset{showonlyrefs}

\usepackage{thmtools}
\usepackage[shortlabels]{enumitem}

\usepackage{version}
\usepackage[obeyFinal]{todonotes}
\setuptodonotes{inline}
\usepackage{cancel}
\usepackage[normalem]{ulem}

\numberwithin{equation}{section}

%% file: preamble/shortcuts.tex
\usepackage{bbm}
\usepackage{bm}
\usepackage[normalem]{ulem}
\usepackage{cancel}

\newcommand{\E}{\mathbb{E}}
\newcommand{\Pbb}{\mathbb{P}}
\newcommand{\R}{\mathbb{R}}
\newcommand{\Z}{\mathbb{Z}}
\newcommand{\1}{\mathbf{1}}

\renewcommand{\P}{\mathbb{P}}

%% file: preamble/theoremEnvironments.tex
\newtheorem{theorem}{Theorem}[section]
\newtheorem{lemma}[theorem]{Lemma}

\newtheorem{proposition}[theorem]{Proposition}

\theoremstyle{definition}
\newtheorem{definition}[theorem]{Definition}
\newtheorem{remark}[theorem]{Remark}
\newtheorem{assumption}[theorem]{Assumption}